\DeclareMathAlphabet\oldmathcal{OMS}        {cmsy}{b}{n}
\SetMathAlphabet    \oldmathcal{normal}{OMS}{cmsy}{m}{n}
\DeclareMathAlphabet\oldmathbcal{OMS}       {cmsy}{b}{n}
\newtheorem{thm}{Theorem}
\newtheorem{lem}[thm]{Lemma}
\newtheorem{prop}[thm]{Proposition}
\newtheorem{cor}[thm]{Corollary}
\newtheorem{defn}[thm]{Definition}
\newtheorem{rem}{Remark}
\newtheorem{question}[thm]{Question}
\newenvironment{rmk}{\refstepcounter{thm} \medskip \noindent {\bf  Remark \arabic{thm}.\,}}{\hfill\mbox{}\bigskip}
\newtheorem*{ack}{Acknowledgements}
\def\d{\partial}
\def\<{\langle}
\def\>{\rangle}
\def\fract#1#2{\raise4pt\hbox{$ #1 \atop #2 $}}
\def\bbc{{\mathbb C}}
\def\bbq{{\mathbb Q}}
\def\bbr{{\mathbb R}}
\def\bbz{{\mathbb Z}}
\def\gra{\alpha}
\def\grd{\delta}
\def\gri{\iota}
\def\grl{\lambda}
\def\grz{\zeta}
\def\grS{\Sigma}
\def\bfw{{\bf w}}
\def\calo{{\mathcal O}}
\def\cald{{\mathcal D}}
\def\calf{{\mathcal F}}
\def\cali{{\mathcal I}}
\def\calo{{\mathcal O}}
\def\cals{{\oldmathcal S}}
\def\calx{{\mathcal X}}
\def\la#1{\hbox to #1pc{\leftarrowfill}}
\def\ra#1{\hbox to #1pc{\rightarrowfill}}
\def\ge{{\mathfrak e}}
\def\gt{{\mathfrak t}}
\def\gu{{\mathfrak u}}
\def\gA{{\mathfrak A}}
\def\gF{{\mathfrak F}}
\def\gM{{\mathfrak M}}
\newcommand{\cH}{\mathcal{H}}
\newcommand{\cL}{\mathcal{L}}
\newcommand{\cO}{\mathcal{O}}
\newcommand{\cS}{\mathcal{S}}
\newcommand{\cW}{\mathcal{W}}
\def\C{\mathbb{C}}
\def\R{\mathbb{R}}
\newcommand{\Lie}{\operatorname{Lie}}
\newcommand{\Aut}{\operatorname{\mathfrak{Aut}}}
\newcommand{\Cal}{\operatorname{Cal}}
\newcommand{\Fut}{\operatorname{Fut}}
\newcommand{\ol}[1]{\overline{#1}}
\def\hook{\mathbin{\hbox to 6pt{%
                 \vrule height0.4pt width5pt depth0pt
                 \kern-.4pt
                 \vrule height6pt width0.4pt depth0pt\hss}}}
\begin{document}

\title[Relative K-stability]{Relative K-stability and Extremal Sasaki metrics}

\author{Charles P. Boyer and Craig van Coevering}\thanks{The first author was partially supported by grant \#245002 from the Simons Foundation.}
\address{Charles P. Boyer, Department of Mathematics and Statistics,
University of New Mexico, Albuquerque, NM 87131.}
\email{cboyer@math.unm.edu} 
\address{Craig van Coevering, School of Mathematical Sciences, U.S.T.C., Anhui, Hefei 230026, P. R. China}
\email{craigvan@ustc.edu.cn}
\keywords{Sasakian, K-stability, Sasaki-extremal}
\subjclass{53C25 primary, 32W20 secondary}

\begin{abstract}
We define K-stability of a polarized Sasakian manifold relative to a maximal torus of automorphisms.   The existence of a 
Sasaki-extremal metric in the polarization is shown to imply that the polarization is K-semistable.   
Computing this invariant for the deformation to the normal cone gives an extention of the Lichnerowicz obstruction, due
to Gauntlett, Martelli, Sparks, and Yau, to an obstruction of Sasaki-extremal metrics.  We use this to
give a list of examples of Sasakian manifolds whose Sasaki cone contains no extremal representatives. These give the first examples of Sasaki cones of dimension greater than one that contain no extremal Sasaki metrics whatsoever. In the process we compute the unreduced Sasaki cone for an arbitrary smooth link of a weighted homogeneous polynomial.

\end{abstract}

\maketitle

\section{Introduction}\label{sec:intro}

Sasaki-extremal metrics were introduced in \cite{BGS06} and provide a more general 
notion of a canonical metric than constant scalar curvature, which is obstructed by the Futaki invaraiant.   
On a Sasakian manifold it is also natural to deform the Reeb vector field by Hamiltonian holomorphic vector fields,
giving the notion of the Sasaki cone $\gt^+$ of a Sasakian manifold, a notion analogous to the K\"{a}hler cone in K\"{a}hler geometry.
It is natural to ask which Reeb vector fields $\xi\in\gt^+$ admit Sasaki-extremal representatives.  
This subset, called the {\it extremal set}, $\ge\subseteq\gt^+$, was shown to be open by the above authors.  
But little is known about this set because there are no known obstructions, besides the obstructions of Gauntlett, Martelli, Sparks, and Yau~\cite{GMSY06}
which obstruct the existence of a Sasaki-Einstein metric in the positive definite case and thus a Sasaki-extremal metric if the 
Futaki invariant vanishes.  

It is natural to conjecture that the existence of a Sasaki-extremal metric is equivalent to some form of GIT stability
of the affine cone $(Y,\xi)$, where $Y=C(M)\cup\{o\}$ is the K\"{a}hler cone on $M$ with the vertex added, polarized by the Reeb
vector field $\xi$.   Interesting work has been done in this direction~\cite{CoSz12,CoSz15} using K-stability in particular.  
Collins and Sz\'ekelyhidi~\cite{CoSz12} extended K-stability to irregular Sasakian manifolds using the Hilbert series of $(Y,\xi)$, and they
proved that the existence of a constant scalar curvature metric implies K-semistability.  

We define a relative version of the K-stability of Collins and Sz\'ekelyhidi to get an obstruction to Sasaki-extremal metrics.
This is K-stability relative to a maximal torus $T\subset\gA\gu\gt(Y,\xi)$.  We prove that the existence of a Sasaki-extremal structure 
compatible with a polarized cone $(Y,\xi)$ implies that $(Y,\xi)$ is K-semistable relative to a maximal torus $T$.  

Considering the Rees algebra degenerations which, as shown in~\cite{CoSz12}, give the Lichnerowicz obstructions of 
Gauntlett, Martelli, Sparks, and Yau, we give a generalized Lichnerowicz obstruction to the existence of 
a Sasaki-extremal metric depending on a $T$-homogenous $f\in H^0(Y,\calo_Y)$. 
We apply our construction to a large class of weighted homogeneous polynomials by giving a sufficient condition for the obstruction of extremal Sasaki 
metrics in the entire Sasaki cone.  This is Theorem \ref{whpobs} below.
We also present a list of Sasaki manifolds in Table \ref{sasconetable} which has families of Sasaki cones of dimension greater than one with no extremal Sasaki metrics. 
In particular, our results can be rephrased in terms of the moduli space of positive Sasaki classes with vanishing first Chern class, viz.

\begin{thm}\label{infcon}
Let $M$ be one of the smooth manifolds listed in the first five entries or the last entry of Table \ref{sasconetable}. Then the moduli space $\gM^c_{+,0}(M)$ of positive 
Sasaki classes with $c_1(\cald)=0$ has a countably infinite number of components of dimension greater than one and which contain no extremal Sasaki metrics, i.e. $\ge=\emptyset$. 
Moreover, these different components correspond to isomorphic transverse holomorphic structures. 
\end{thm}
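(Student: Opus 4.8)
The plan is to exhibit, for each relevant entry of Table \ref{sasconetable}, a countably infinite family of weighted homogeneous polynomials whose links all realize the fixed manifold $M$, and then to invoke Theorem \ref{whpobs} to annihilate the extremal set on every one of the resulting Sasaki cones. First I would fix, for each listed $M$, a family $\{f_k\}_{k\in\bbn}$ of weighted homogeneous polynomials with weight data $\mathbf{w}_k$ and degree $d_k$ whose smooth links $L_{f_k}$ are all diffeomorphic to $M$; this constancy of diffeomorphism type is precisely what is tabulated, and is read off from the standard diffeomorphism classification of links of weighted homogeneous singularities. For such links the vanishing $c_1(\cald)=0$ is automatic (the Poincar\'e residue provides a nowhere-vanishing transverse holomorphic volume form), so imposing the positivity inequality $|\mathbf{w}_k|=\sum_i w_i^{(k)}>d_k$ places each $L_{f_k}$ in $\gM^c_{+,0}(M)$. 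The relevant entries are exactly those for which both the dimension count and the obstruction hypothesis below can be arranged along the whole family.

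Next I would control the dimension and the count. Using the description of the unreduced Sasaki cone of an arbitrary smooth link computed earlier in the paper, I would read off $\dim\gt^+(L_{f_k})$, which equals the rank of a maximal torus $T\subset\gA\gu\gt(Y_{f_k},\xi)$ and is governed by coincidences among the weights $w_i^{(k)}$ and the monomial structure of $f_k$; the entries are chosen so that this rank is at least two, giving components of dimension greater than one. That the family $\{f_k\}$ meets infinitely many distinct components of $\gM^c_{+,0}(M)$ would follow from a discrete invariant separating them --- for instance the weight vector modulo its finite symmetry group, or an order invariant in the basic cohomology --- which takes infinitely many values along the family.

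The analytic core is the application of Theorem \ref{whpobs}. For each $f_k$ I would verify the sufficient condition of that theorem; by the theorem this forces $\ge=\emptyset$ on the entire Sasaki cone $\gt^+(L_{f_k})$, so that no Reeb field in the cone carries a Sasaki-extremal representative. In practice this reduces to exhibiting a $T$-homogeneous $f\in H^0(Y_{f_k},\calo_{Y_{f_k}})$ for which the generalized Lichnerowicz obstruction --- the relative Futaki invariant of the associated Rees-algebra (deformation-to-the-normal-cone) degeneration --- has the obstructing sign, which is a finite arithmetic check in $\mathbf{w}_k$ and $d_k$.

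Finally, for the ``moreover'' I would show that the distinct components nonetheless carry isomorphic transverse holomorphic structures. The point is that the data separating the components is not holomorphic transverse data: the underlying complex cones $Y_{f_k}$, equivalently the transverse holomorphic foliations, are biholomorphic along the family, while what genuinely varies from component to component is the remaining polarization and contact data. I expect this to be the main obstacle, since it requires reconciling ``infinitely many distinct components'' with ``a single transverse holomorphic structure'': one must track carefully which invariants are biholomorphic invariants of the cone --- and hence of the transverse holomorphic structure --- and which are the finer Sasaki-theoretic invariants that distinguish the components, producing a separating invariant that is simultaneously blind to the transverse holomorphic structure.
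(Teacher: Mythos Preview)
Your overall plan matches the paper's: exhibit the tabulated families $\{f_k\}$, check the arithmetic hypothesis of Theorem~\ref{whpobs} to kill the extremal set on each Sasaki cone, and then argue separately that the components are distinct yet share a transverse holomorphic structure. Two points, however, need correction.

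First, for separating the components, the paper does not use weight data or basic-cohomological order invariants as you propose; it invokes contact-topological results (Ustilovsky, Kwon--van~Koert, Gutt, Uebele, and \cite{BMvK15}) showing that the links $L_{f_k}$ carry pairwise inequivalent contact structures. Since a Sasaki class determines its underlying contact structure, inequivalent contact structures immediately yield distinct components of $\gM^c_{+,0}(M)$. Your proposed invariants are not obviously well defined on moduli components: the weight vector is attached to a polynomial presentation, not intrinsically to a Sasaki class, and you give no argument that it descends.

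Second, your mechanism for the ``moreover'' contains a genuine error. The affine cones $Y_{f_k}$ are \emph{not} biholomorphic along the family---for instance, the Brieskorn hypersurfaces $z_0^{2k+1}+z_1^2+\cdots+z_{2n+1}^2=0$ have distinct Milnor numbers and are pairwise non-isomorphic as germs of singularities---so ``the underlying complex cones $Y_{f_k}$ \ldots\ are biholomorphic'' is false. The paper instead argues that the $S^1$-quotients of the links (i.e.\ the transverse K\"ahler orbifolds for the standard Reeb field) are isomorphic as algebraic varieties, referring to the proof of Proposition~4.1 of \cite{BGN03b} for the homotopy-sphere case. Thus the transverse holomorphic structures coincide even though the cones do not: the isomorphism lives on the quotient, not on $Y$. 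This is exactly the reconciliation you flagged as the main obstacle, but the resolution runs through the quotient varieties rather than through biholomorphisms of the cones.
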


\begin{ack}
This paper was born out of discussions that took place at the LeBrunfest sponsored by the Centre de Recherches Math\'ematique (CRM) and  hosted by the Universit\'e du Qu\'ebec \`a Montr\'eal (UQAM) whom we thank for their hospitality and support. We would also like to thank Tristan Collins, Hongnian Huang, and Gabor Sz\'ekelyhidi for comments and their interest in our work.
\end{ack}

\section{Background}
In this section we present a brief discussion of Sasakian geometry and refer to \cite{BG05} for details.
A Riemannian manifold $(M,g)$ is {\it Sasakian} if the metric cone $(C(M),\bar{g}),\ C(M) =M\times\bbr^+, \bar{g}=dr^2 +r^2 g$, is K\"{a}hler. 
A Sasakian structure is a special case of a {\it contact metric structure} $\cals=(\xi,\eta,\Phi,g)$ which is a quadruple where $\cald=\ker\eta$ is a contact structure, 
$\xi$ is the Reeb vector field of the contact form $\eta$, $\Phi$ is an endomorphism field that annihilates $\xi$ and such that $(\cald,\Phi |_\cald)$ is a strictly 
pseudoconvex almost CR structure, and $g$ is a compatible Riemannian metric, i.e. $g(\Phi X,\Phi Y)=g(X,Y)-\eta(X)\eta(Y)$ for any vector fields $X,Y$. 
Then the structure $\cals=(\xi,\eta,\Phi,g)$ is {\it Sasakian} if $\pounds_\xi\Phi=0$ and the almost CR structure $(\cald,\Phi |_\cald)$ is integrable. 
A contact structure $\cald$ is said to be of {\it Sasaki type} if there exists such a quadruple $(\xi,\eta,\Phi,g)$ which is Sasakian with $\cald=\ker\eta$. 
Note that all Sasaki manifolds are oriented and here we shall only consider compact Sasaki manifolds.

Since the Reeb vector field $\xi$ is nowhere vanishing it defines a 1-dimensional foliation $\calf_\xi$ known as the {\it characteristic foliation} which is a 
{\it Riemannian foliation} when $(\xi,\eta,\Phi,g)$ is Sasakian. In fact, in the Sasakian case, the transverse geometry is K\"ahlerian. The {\it basic first Chern class} 
$c_1(\calf_\xi)$ is an important invariant of a Sasakian structure. A Sasakian structure $(\xi,\eta,\Phi,g)$ is {\it of positive type} or simply {\it positive} if $c_1(\calf_\xi)$ 
can be represented by a positive definite basic $(1,1)$ form. We denote the {\it space of Sasakian structures} with Reeb vector field $\xi$ and transverse holomorphic 
structure $\ol{J}$ by $\cS(\xi,\ol{J})$. Note that $c_1(\calf_\xi)$ and hence positivity depends only on the space $\cS(\xi,\ol{J})$ and not on a particular element 
$(\xi,\eta,\Phi,g)\in \cS(\xi,\ol{J})$. So we can think of $c_1(\calf_\xi)$ as representing the {\it Sasaki class} of $\cals=(\xi,\eta,\Phi,g)$. 
By the transverse version~\cite{ElK} of Yau's theorem a positive Sasakian structure has a Sasaki metric of positive Ricci curvature. Recall that in the basic cohomology 
exact sequence
$$0\ra{2.0}H^0_B(\calf_\xi)\fract{\grd}{\ra{2.0}} H^2_B(\calf_\xi)\fract{\gri_*}{\ra{2.0}} H^2(M,\bbr)\ra{2.0}\cdots $$
we have $\gri_*c_1(\calf_\xi)=c_1(\cald)$. Thus, if $c_1(\cald)=0$ there exists $a\in\bbr$ such that $c_1(\calf_{\xi})=a[d\eta]_B$. Following~\cite{BMvK15} 
we denote by $\gM^c_{+,0}(M)$ the {\it moduli space of positive Sasaki classes on $M$ with $c_1(\cald)=0$}. Clearly, in the positive case we have $a>0$. 

If the type of a Sasakian structure on $M$ is positive or indefinite it is possible to deform the characteristic foliation through Sasakian structures by deforming the Reeb vector field. 
This gives rise to the (reduced) {\it Sasaki cone} \cite{BGS06} of $M$. However, here, as in \cite{CoSz12}, it is more convenient to work on the K\"ahler cone $C(M)$ 
or better yet on the K\"{a}hler cone with the vertex added $Y=C(M)\cup\{o\}$ which uniquely has the structure of a normal affine variety.  
We will see that the Reeb vector field $\xi$ gives a notion of a polarization on $Y$.   Choose a torus $T\subset\Aut(Y,\xi)$, which we assume to be maximal, so that  $\xi\in\gt=\Lie(T)$.
We say that a K\"{a}hler cone metric on $Y$ is compatible with $\zeta\in\gt$ if $\zeta$ is its Reeb vector field.   Of course the link $M$ then has a Sasakian structure with 
Reeb vector field $\zeta$. We define the \emph{Sasaki cone} to be the subset $\gt^+ \subset\gt$ 
\begin{equation}\label{eq:Sas-cone1}
\gt^+ =\{ \zeta\in\gt\ |\ \exists\text{ a K\"{a}hler cone metric compatible with }(Y,\zeta) \}
\end{equation}

Given a polarized affine scheme $(Y,\xi)$ with a torus action $T\subset\gA\gu\gt(Y)$ where we assume $\xi\in\gt=\Lie(T)$, the ring of global functions
$\cH=H^0(Y,\cO_Y)$ has a weight space decomposition 
\[ \cH =\bigoplus_{\alpha\in\cW} \cH_{\alpha} \]
where $\cW\subset\gt^*$ is the set of weights.

If $(Y,\xi)$ has a single singular point
it turns out that $\xi\in\gt$ is a Reeb vector field for some Sasakian structure with affine cone $Y$ if and only if for all $\alpha\in\cW, \alpha\neq 0$ we have  $\alpha(\xi)>0$.   
Following \cite{CoSz12} we give a second definition of the \emph{Sasaki cone} (where it is called the \emph{Reeb cone})  in $\gt$ 
\begin{equation}\label{eq:Sas-cone2}
\gt^+ =\{ \xi\in\gt\ |\ \forall~ \alpha\in\cW, \alpha\neq 0 \text{ we have } \alpha(\xi)>0 \}\subset\gt.
\end{equation}

\begin{prop}
The Sasaki cones as defined in (\ref{eq:Sas-cone1}) and in (\ref{eq:Sas-cone2}) are identical.   Thus $\gt^+ \subset\gt$ is an open rational polyhedral cone.
\end{prop}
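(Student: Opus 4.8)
The plan is to prove that the two descriptions define the same set by establishing both inclusions, and then to read off the polyhedral and openness claims directly from the second description. The bridge between the two definitions is the observation that on a K\"{a}hler cone the Reeb field $\zeta$ determines, and is determined by, a grading of $\cH=H^0(Y,\cO_Y)$. Writing $r$ for the radial coordinate and $J$ for the complex structure, the field $r\partial_r - iJ(r\partial_r)$ is of type $(1,0)$ and generates a holomorphic action whose real part generates the dilations $r\partial_r$ and whose imaginary part generates the Reeb flow of $\zeta=J(r\partial_r)$. If $f\in\cH_\alpha$ is a weight vector for the $T$-action, then $\zeta(f)=i\,\alpha(\zeta)\,f$, and since $r\partial_r=-J\zeta$ and $f$ is holomorphic (so $df\circ J=i\,df$) one computes $(r\partial_r)(f)=\alpha(\zeta)\,f$. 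Thus the dilation weight of $f$ equals $\alpha(\zeta)$; this is the identity I use throughout.

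First I would prove $(\ref{eq:Sas-cone1})\subseteq(\ref{eq:Sas-cone2})$. Suppose $\zeta\in\gt$ admits a compatible K\"{a}hler cone metric, so that $r^2$ is a proper exhaustion with $r\to 0$ at the apex $o$ and $r\partial_r$ is the corresponding Euler field. Then the flow of $-r\partial_r$ contracts $Y$ to $o$, so every regular function extends across $o$ and its homogeneous components have nonnegative dilation weight, with weight $0$ occurring only for the constants since $o$ is the unique fixed point of the cone action and $Y$ is connected. By the weight identity this says $\alpha(\zeta)>0$ for every $\alpha\in\cW$ with $\alpha\neq 0$, which is precisely $(\ref{eq:Sas-cone2})$.

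The reverse inclusion $(\ref{eq:Sas-cone2})\subseteq(\ref{eq:Sas-cone1})$ is the substantive direction and is where I expect the real work. Assume $\alpha(\zeta)>0$ for every nonzero $\alpha\in\cW$. Choose finitely many non-constant $T$-homogeneous generators $f_1,\dots,f_N$ of $\cH$, of weights $\alpha_1,\dots,\alpha_N$, giving a $T$-equivariant closed embedding $Y\hookrightarrow\bbc^N$ in which $T$ acts diagonally, and set $w_i:=\alpha_i(\zeta)>0$. Because all the $w_i$ are positive, the induced $\bbc^*$-action on $\bbc^N$ has the origin as an attractive fixed point, and the candidate radial function $r^2:=\sum_i |f_i|^{2/w_i}$ is homogeneous of degree $2$, i.e. $(r\partial_r)r^2=2r^2$. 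The task is then to show that a suitable smoothing of $r^2$ is a strictly plurisubharmonic exhaustion on $Y\setminus\{o\}$, so that $\tfrac{i}{2}\ddb r^2$ is a K\"{a}hler cone metric, and that its Reeb field is $J(r\partial_r)=\zeta$. The hard part is exactly the strict plurisubharmonicity, since the naive potential need not be smooth or strictly convex where some $f_i$ vanishes; I would address this by averaging a strictly plurisubharmonic reference potential over the compact torus $T$ and rescaling it to be homogeneous of degree $2$, using attractiveness of the $\bbc^*$-action for properness and strict pseudoconvexity of the link for transverse positivity. Verifying that the resulting metric is a genuine K\"{a}hler cone metric with Reeb field $\zeta$ completes this inclusion.

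Finally, the polyhedral and openness claims follow from the second description together with finite generation. Since $\cH$ is a finitely generated $\bbc$-algebra, the monoid $\cW$ is generated by the finitely many weights $\alpha_1,\dots,\alpha_k$ of a set of homogeneous algebra generators, so every nonzero $\alpha\in\cW$ is a nonnegative integral combination of these. Hence $\alpha(\xi)>0$ for all nonzero $\alpha\in\cW$ if and only if $\alpha_j(\xi)>0$ for each $j$, and therefore
\[
\gt^+ = \{\,\xi\in\gt \mid \alpha_j(\xi)>0,\ j=1,\dots,k\,\}.
\]
This is the intersection of finitely many open half-spaces whose defining functionals $\alpha_j$ lie in the integral weight lattice of $T$, so $\gt^+$ is an open rational polyhedral cone. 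The only genuine obstacle in the whole argument is the construction and positivity of the potential in the reverse inclusion; the forward inclusion and the polyhedral description are essentially formal once the weight identity $(r\partial_r)f=\alpha(\zeta)f$ is in hand.
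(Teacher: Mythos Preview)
Your overall strategy---prove both inclusions via a $T$-equivariant embedding into $\bbc^N$ and then read off the polyhedral description from finite generation of $\cH$---matches the paper's. Your treatment of $(\ref{eq:Sas-cone1})\subseteq(\ref{eq:Sas-cone2})$ is in fact more explicit than the paper, which simply cites \cite{CoSz12} for this direction; and your argument for the rational polyhedral cone is spelled out in detail where the paper leaves it implicit.

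The divergence is in $(\ref{eq:Sas-cone2})\subseteq(\ref{eq:Sas-cone1})$. After embedding $Y\hookrightarrow\bbc^N$ diagonally with positive weights $w_i=\alpha_i(\zeta)$, the paper does not attempt to build a potential from scratch: it simply invokes the \emph{weighted Sasakian structure} on $S^{2N-1}$ with weights $(w_1,\dots,w_N)$, which is already known to exist (see \cite{BG05}), and restricts the associated K\"ahler cone metric on $\bbc^N$ to the invariant complex subvariety $Y$. Since $Y$ is preserved by both the complex structure and the weighted $\bbc^*$-action, this restriction is automatically a K\"ahler cone metric with Reeb field $\zeta$. By contrast, your candidate $r^2=\sum_i|f_i|^{2/w_i}$ is generally not smooth (the exponents need not be even integers), and your proposed fix---averaging a reference psh potential over $T$ and then ``rescaling to be homogeneous of degree $2$''---is the genuine gap: averaging over the compact torus gives $T$-invariance but does not produce homogeneity under the Euler field of $\zeta$, and a rescaling that forces homogeneity will in general destroy strict plurisubharmonicity. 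The paper's shortcut sidesteps all of this by appealing to an off-the-shelf construction on the ambient $\bbc^N$, which is both simpler and complete.
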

\begin{proof}
As described in~\cite{vCo11} by the second author, or in~\cite{CoSz12}, by choosing enough homogenous elements $f_i \in\cH$ one can define a $T$-equivariant
embedding $\Psi =(f_1,\ldots, f_N) : Y \rightarrow\C^N$, where $T$ acts diagonally on $\C^N$ with weight $\alpha_j$ acting on $z_j$.  
If $\zeta\in\gt^+$ according to definition (\ref{eq:Sas-cone2})
then the polarized affine cone $(Y,\zeta)$ has a Sasakian structure induced by the K\"{a}hler cone structure on $\C^N$ with Reeb vector field
$\sum_{j=1}^N \alpha_i(\zeta) (x_j \d_{y_j} -y_j \d_{x_j})$,  where $z_j =x_j +iy_j$.   The Sasakian metric on $S^{2N-1}$ is the weighted Sasakian
structure with weights $(w_1,\ldots, w_N) =(\alpha_1(\zeta),\ldots,\alpha_N(\zeta))$, as defined in~\cite{BG05}.

The other direction, that if $\zeta\in\gt$ satisfies definition (\ref{eq:Sas-cone1}) then it satisfies (\ref{eq:Sas-cone2}), is proved in~\cite{CoSz12}.
\end{proof}

Generally, the type of a Sasakian structure can change as one moves in the Sasaki cone; however, this does not happen if $c_1(\cald)=0$.
\begin{lem}\label{poscone}
Let $(M,\cald)$ be a contact manifold of Sasaki type with $c_1(\cald)=0$, and let $\cals=(\xi,\eta,\Phi,g)$ be a Sasakian structure satisfying $\eta=\ker\cald$. 
Suppose further that the Sasaki cone $\gt^+$ has dimension greater than one. Then all Sasakian structures whose Reeb vector field $\xi'$ is in $\gt^+$ are positive.
\end{lem}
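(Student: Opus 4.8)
The plan is to show directly that \emph{every} $\xi'\in\gt^+$ is of positive type, ruling out the null and negative cases pointwise; connectedness of the cone is not needed.

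\textbf{Reduction to a sign.} I would first record that as $\xi'$ varies over $\gt^+$ the underlying CR structure $(\cald,\ol J)$, and hence the contact bundle $\cald$, stays fixed, so that $c_1(\cald)=0$ for every member of the cone. Fix $\xi'\in\gt^+$ with contact form $\eta'$. Feeding $c_1(\calf_{\xi'})$ into the basic cohomology exact sequence
$$0\ra{2.0}H^0_B(\calf_{\xi'})\fract{\grd}{\ra{2.0}} H^2_B(\calf_{\xi'})\fract{\gri_*}{\ra{2.0}} H^2(M,\bbr)$$
and using $\gri_*c_1(\calf_{\xi'})=c_1(\cald)=0$ yields $a'\in\bbr$ with $c_1(\calf_{\xi'})=a'[d\eta']_B$. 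Since $[d\eta']_B$ is represented by the positive definite transverse K\"ahler form, $c_1(\calf_{\xi'})$ is definite: the structure is positive, null, or negative according as $a'>0$, $a'=0$, or $a'<0$, and the indefinite case cannot occur. Thus it suffices to rule out $a'\le 0$ for each $\xi'$.

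\textbf{Rigidity in the null and negative cases.} For each $\xi'\in\gt^+$ the torus $T$ still preserves a K\"ahler cone metric compatible with $\xi'$, and since this action is Hamiltonian, a complement of $\bbr\xi'$ in $\gt$ projects transversally to $\calf_{\xi'}$ to give $\dim\gt^+-1\ge 1$ nonzero transverse Hamiltonian holomorphic vector fields. I would then show these cannot exist when $a'\le 0$. If $a'<0$, the transverse Aubin--Yau theorem produces a transverse K\"ahler--Einstein metric of negative transverse scalar curvature, on which the transverse Matsushima/Bochner argument forbids any nonzero transverse holomorphic vector field. If $a'=0$, the transverse metric is Ricci-flat, so a transverse holomorphic vector field $V$ is transversally parallel; then $\iota_V d\eta'$ is basic-harmonic, while the Hamiltonian condition forces it to be $d_B$-exact, whence $\iota_V d\eta'=0$ and $V=0$. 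Either way all transverse Hamiltonian holomorphic vector fields vanish, contradicting their existence just established.

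\textbf{Conclusion and main obstacle.} Hence $a'\le 0$ is impossible for any $\xi'\in\gt^+$, so $a'>0$ and every such structure is positive. The crux is the rigidity of the second step, which must be run in the transverse (basic) category: for an irregular $\xi'$ there is no K\"ahler quotient orbifold, so I would replace the classical Aubin--Yau, Matsushima, and Bochner statements by their versions for basic forms and the transverse Hodge theory of $\calf_{\xi'}$. An alternative route would use continuity of $a'$ on $\gt^+$ together with connectedness to fix the sign of $a'$ from a single point; but since no member of $\gt^+$ is assumed positive a priori, the negative case must still be excluded, so the pointwise rigidity above appears unavoidable.
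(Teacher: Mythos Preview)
Your proof is correct and follows the same line as the paper's: both reduce via the basic exact sequence to $c_1(\calf_{\xi'})=a'[d\eta']_B$ and then argue $a'>0$ from $\dim\gt^+>1$. The only difference is that the paper simply cites Proposition~8.2.15 of \cite{BG05} for this last step, whereas you have reconstructed its proof---namely, that $a'\le 0$ forces (via the transverse Aubin--Yau theorem of \cite{ElK} and a transverse Bochner/Matsushima argument) the vanishing of all transverse Hamiltonian holomorphic vector fields, contradicting the existence of the torus $T$.
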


\begin{proof}
Let $\cals'=(\xi',\eta',\Phi',g')$ be any Sasakian structure in $\gt^+$. Then as stated above there exists $a\in\bbr$ such that $c_1(\calf_{\xi'})=a[d\eta']_B$. 
But since the dimension of $\gt^+$ is greater than one, $a>0$ by Proposition 8.2.15 of \cite{BG05}. So $\cals'$ is positive. 
\end{proof}

\section{Relative K-stability}
Relative K-stability and its relation with extremal K\"ahler metrics were first studied by Sz\'ekelyhidi \cite{Sze06,Sze07}. Here following Collins and Sz\'ekelyhidi~\cite{CoSz12} 
we adopt these ideas to the Sasaki setting. They defined the {\it $T$-invariant index character} as a Hilbert series for $\xi\in \gt^+$, viz.
\begin{equation}
F(\xi,t) := \sum_{\alpha\in\cW} e^{-t\alpha(\xi) }\dim\cH_{\alpha}.
\end{equation}
It is proved in~\cite{CoSz12} that $F(\xi,t)$ converges and has a meromorphic extension with expression
\begin{equation}\label{a0eqn}
F(\xi,t) =\frac{a_0(\xi) n!}{t^{n+1}} +\frac{a_1(\xi)(n-1)!}{t^n} +O(t^{1-n}).
\end{equation}

Given $\zeta\in\gt$ we define the weight characters:
\begin{align*}
C_{\zeta}(\xi,t) = \sum_{\alpha\in\cW} e^{-t\alpha(\xi)} \alpha(\zeta) \dim\cH_{\alpha}\\
C_{\zeta^2}(\xi,t) = \sum_{\alpha\in\cW} e^{-t\alpha(\xi)}\bigl( \alpha(\zeta)\bigr)^2 \dim\cH_{\alpha}.
\end{align*}
Again, these have meromorphic extensions
\begin{align}
C_{\zeta}(\xi,t) =  \frac{b_0(\xi) (n+1)!}{t^{n+2}} +\frac{b_1(\xi)n!}{t^{n+1}} +O(t^n), \\
C_{\zeta^2}(\xi,t) =\frac{c_0(\xi)(n+2)!}{t^{n+3}} +O(t^{-n-2}).
\end{align}
The coeficients $a_0, a_1, b_0, b_1, c_0$ depend smoothly on $\xi\in\gt^+$ and
\begin{equation}\label{eq:coef-der}
 \begin{split}
b_0 & =\frac{-1}{n+1} D_{\zeta} a_0 \\
b_1 & =\frac{-1}{n}D_{\zeta} a_1 \\
c_0 & =\frac{1}{(n+1)(n+2)}D^2_{\zeta} a_0 .
\end{split} 
\end{equation}
We also have a norm defined on $\gt$ defined by
\begin{equation}\label{eq:norm}
\|\zeta\|_{\xi}^2 =c_0(\xi) -\frac{b_0(\xi)^2}{a_0(\xi)}.
\end{equation}

A \emph{$T$-equivariant test configuration} is a flat family of affine schemes
\[Y=Y_1 \subset\Upsilon\overset{\varpi}{\longrightarrow}\C ,\]
such that $\varpi$ is $\C^*$-equivariant and $T$ acts on the fibers.  The $\C^*$ action induces an action on the central fiber $Y_0$.
Let $\zeta\in\Lie(\C^*)$ be the generator of this action.  The \emph{Donaldson-Futaki invariant} of the test configuration is 
\[ \Fut(Y_0, \xi,\zeta) =\frac{a_1(\xi)}{a_0(\xi)}b_0(\xi) -b_1(\xi).\]
 
In the following we assume that $T\subset\gA\gu\gt(Y)$ is a maximal torus with $\xi\in\gt =\Lie(T)$.  
Define $\chi\in\gt$ to be the element dual to $\Fut(Y_0, \xi,\cdot):\gt\rightarrow\R$ with respect to (\ref{eq:norm}).   
Notice that both the bilinear form $\langle\cdot,\cdot\rangle_{\xi}$ defined by (\ref{eq:norm}) and $\Fut(Y_0, \xi,\cdot)$ vanish on $\R\xi$,
so $\chi$ is well defined modulo $\xi$.  Up to a constant $\chi$ is just the transversely 
extremal vector field, as defined in~\cite{ FuMa95}.   We define the \emph{Donaldson-Futaki invariant relative to $T$} of a test configuration
\[  Fut_{\chi}(Y_0, \xi,\zeta)= Fut(Y_0, \xi,\zeta) -\langle \zeta,\chi\rangle.\]

\begin{defn}
A polarized affine variety  $(Y,\xi)$ with a unique singular point is \emph{K-semistable relative to $T$} if for every T-equivariant test configuration
\[ \Fut_{\chi}(Y_0,\xi,\zeta)\geq 0.\]
\end{defn}

Recall that for any $(\eta,\xi,\Phi,g) \in\cS(\xi,\ol{J})$ the \emph{Calabi functional} is defined by
\begin{equation}
\Cal_{\xi,\ol{J}}(g) =\Bigl( \int_M (S_g -\ol{S}_g)^2 \, d\mu_g \Bigr)^{1/2},
\end{equation}
where $S_g$ is the scalar curvature and $\ol{S}_g$ is the average scalar curvature. A Sasaki metric $g$ is {\it extremal} if it is a critical point of $\Cal_{\xi,\ol{J}}^2$. 
This amounts to the fact that the $(1,0)$ component of the gradient of the scalar curvature $S_g$ is a transversely holomorphic vector field. Equivalently, 
the transverse K\"ahler structure is extremal.   Note that when $g$ is extremal, this transversely holomorphic vector field is, up to a constant factor, $\chi$ defined above.

An interesting property of the Calabi functional follows from the Donaldson lower bound~\cite{Don05} extended to Sasakian manifolds in \cite{CoSz12}. 
\begin{thm}
\begin{equation}\label{eq:Calabi-lb1}
 \underset{(\eta,\xi,\Phi,g) \in\cS(\xi,\ol{J})}{\inf}\Cal_{\xi,\ol{J}}(g)\geq c(n) \|\chi\|, 
\end{equation}
where $c(n)$ the constant, depending only on $n$, follows from the different scaling of the algebraic and analytic Futaki invariant.
The lower bound $\Cal_{\xi,\ol{J}}(g) =c(n) \|\chi\|$ is achieved if and only if $g$ is Sasaki-extremal.
\end{thm}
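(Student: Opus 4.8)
The plan is to adapt Donaldson's lower bound for the Calabi functional~\cite{Don05} to the transverse Kähler geometry of $\cS(\xi,\ol{J})$, using its extension to the Sasaki setting in~\cite{CoSz12} together with the duality that defines $\chi$. The decisive input is the \emph{analytic} description of the invariants. For $\zeta\in\gt$ let $h_\zeta$ denote the Killing potential of the associated transverse Hamiltonian holomorphic vector field, normalized to have zero average with respect to $d\mu_g$. First I would record that, for every $g\in\cS(\xi,\ol{J})$, the analytic Futaki invariant $\int_M h_\zeta\,(S_g-\ol{S}_g)\,d\mu_g$ agrees with the algebraic $\Fut(Y_0,\xi,\zeta)$ read off from the Hilbert-series coefficients in~\eqref{a0eqn} and~\eqref{eq:coef-der}, after the dimensional rescaling that converts index-character asymptotics into genuine curvature integrals on $M$; likewise the algebraic norm $\|\zeta\|_\xi^2=c_0(\xi)-b_0(\xi)^2/a_0(\xi)$ of~\eqref{eq:norm} equals, up to its own dimensional factor, $\int_M h_\zeta^2\,d\mu_g$. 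The constant $c(n)$ of the statement is precisely the discrepancy between these two rescalings, which is why it ``follows from the different scaling of the algebraic and analytic Futaki invariant.'' Crucially, the algebraic side depends only on the polarized cone $(Y,\xi)$ and not on the representative $g$, which is what makes the comparison uniform as $g$ ranges over $\cS(\xi,\ol{J})$.

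Second, granting these identifications, I would apply the Cauchy--Schwarz inequality in $L^2(M,d\mu_g)$ to the analytic formula, estimating $\int_M h_\zeta\,(S_g-\ol{S}_g)\,d\mu_g\le\|h_\zeta\|_{L^2}\,\Cal_{\xi,\ol{J}}(g)$. Inserting the two normalization factors turns this into the per-$\zeta$ Donaldson bound
\begin{equation*}
\Cal_{\xi,\ol{J}}(g)\ge c(n)\,\frac{\Fut(Y_0,\xi,\zeta)}{\|\zeta\|_\xi},
\end{equation*}
valid for every nonzero $\zeta$ (and vacuous when the right-hand side is nonpositive). Now both $\langle\cdot,\cdot\rangle_\xi$ and $\Fut(Y_0,\xi,\cdot)$ vanish on $\R\xi$, so they descend to the quotient $\gt/\R\xi$, on which $\langle\cdot,\cdot\rangle_\xi$ is an inner product and $\chi$ is, by construction, the Riesz dual of $\Fut(Y_0,\xi,\cdot)$, i.e. $\Fut(Y_0,\xi,\zeta)=\langle\zeta,\chi\rangle_\xi$. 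Hence the algebraic Cauchy--Schwarz inequality gives $\sup_{\zeta\neq0}\Fut(Y_0,\xi,\zeta)/\|\zeta\|_\xi=\|\chi\|_\xi$, attained at $\zeta=\chi$. Taking the supremum over $\zeta$ in the displayed bound yields $\Cal_{\xi,\ol{J}}(g)\ge c(n)\,\|\chi\|_\xi$ for every $g$, which is~\eqref{eq:Calabi-lb1} after passing to the infimum over $\cS(\xi,\ol{J})$.

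For the equality statement I would trace back the single nontrivial estimate, namely the $L^2$ Cauchy--Schwarz step evaluated at $\zeta=\chi$. Equality there forces $S_g-\ol{S}_g=\lambda\,h_\chi$ for a constant $\lambda$, i.e. the mean-zero part of the (transverse) scalar curvature is proportional to the Killing potential of $\chi$. This is exactly the condition that $\grad^{1,0}S_g$ be a transversely holomorphic vector field, the extremal condition, and it identifies that vector field with $\chi$ up to scale, as already noted before the statement. Conversely, if $g$ is Sasaki-extremal then $\grad^{1,0}S_g$ is transversely holomorphic and equals $\chi$ up to a constant, so $S_g-\ol{S}_g$ is a multiple of $h_\chi$, Cauchy--Schwarz is tight, and $\Cal_{\xi,\ol{J}}(g)=c(n)\,\|\chi\|_\xi$.

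I expect the main obstacle to lie entirely in the first paragraph: rigorously establishing the analytic Futaki formula together with the matching of the algebraic norm and the $L^2$ potential norm, with the correct dimensional constant $c(n)$. This is where the Hilbert-series description of $a_0,a_1,b_0,b_1,c_0$ must be tied to curvature integrals on $M$ via an equivariant localization (Duistermaat--Heckman type) computation, and where every normalization convention has to be tracked consistently; it is precisely the content of the extension of Donaldson's framework carried out in~\cite{CoSz12}. Once that dictionary is in place, the remaining duality and Cauchy--Schwarz steps, and the equality analysis, are formal.
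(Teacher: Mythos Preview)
Your argument is correct and arrives at the same place, but by a somewhat more direct route than the paper's. The paper takes the Collins--Sz\'ekelyhidi bound $\Cal_{\xi,\ol{J}}(g)\ge -c(n)\,\Fut(Y_0,\xi,\zeta)/\|\zeta\|$ for an \emph{arbitrary} test configuration as a black box from \cite[Thm.~5]{CoSz12}, and then simply plugs in the product configuration with $\zeta=-\chi$ (approximating when $\chi$ is irrational). You instead stay entirely on the product side: once the algebraic Donaldson--Futaki invariant and norm are matched with the analytic Futaki invariant and the Futaki--Mabuchi $L^2$ norm of the potential, the per-$\zeta$ bound is nothing but Cauchy--Schwarz in $L^2(M,d\mu_g)$, and Riesz duality on $\gt/\R\xi$ picks out $\zeta=\chi$ as the optimizer. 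This bypasses the Bergman-kernel and test-configuration machinery behind \cite[Thm.~5]{CoSz12}, at the cost of needing the metric-independence of the Futaki--Mabuchi bilinear form across all of $\cS(\xi,\ol{J})$, which you rightly flag as the substantive input. Indeed, the paper's remark immediately after the proof says exactly this: the \cite{FuMa95}-style argument you give handles $T$-invariant metrics directly, and citing \cite{CoSz12} is what extends the bound to every $g$ without a separate appeal to that metric-independence. For the equality characterization the two treatments essentially coincide; one detail you leave implicit is the paper's conjugation step, moving an extremal $g$ by an element of $\Aut(Y,\xi)$ so that the chosen maximal torus $T$ lies inside its isometry group before identifying the extremal vector field with $\chi$.
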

\begin{proof}
This is an application of the lower bound~\cite[Thm. 5]{CoSz12}
\begin{equation}\label{eq:Calabi-CoSz}
Cal_{\xi,\ol{J}}(g) \geq -c(n) \frac{\Fut_{\chi}(Y_0,\xi,\zeta)}{\|\zeta\|}, 
\end{equation}
for any $(\eta,\xi,\Phi,g) \in\cS(\xi,\ol{J})$ and test configuration $\calx$ with central fiber $Y_0$ and generator of $\C^*$ action $\zeta$.
Consider a product test configuration with $-\chi$ acting fiber-wise, in particular $\zeta =-\chi$.  The fact that $\chi$ is not necessarily rational is not 
a problem, because one can use an approximation argument.

For the second statement, if the lower bound is achieved at $(\eta,\xi,\Phi,g)$, then it is obviously a critical point
of $\Cal_{\xi,\ol{J}}^2$ and thus Sasaki-extremal.  Conversely, if $(\eta,\xi,\Phi,g)$ is Sasaki-extremal, then we can by conjugating
by an element of $\Aut(Y,\xi)$ assume that $T\subset\Aut (\eta,\xi,\Phi,g)$ is a maximal torus.   Then $S_g -\ol{S}_g$ is the normalized
holomorphy potential of $\chi$ and $\Cal_{\xi,\ol{J}}(g)^2 =c(n) ^2\|\chi\|^2$ be the equality, up to a constant, of the norm on $\gt$ defined in~\cite{FuMa95} and 
the algebraic norm.
\end{proof}

\begin{rmk}
This inequality follows easily from the arguments of~\cite{FuMa95} for $T$-invariant metrics for a maximal torus $T\subset\Aut(Y,\xi)$.  
But this extends the lower bound to all metrics in $\cS(\xi,\ol{J})$.
\end{rmk}

The following is the Sasakian version of a theorem in \cite{Sze06}:

\begin{thm}
Let $T\subset\Aut(Y,\xi)$ be a maximal torus, and let $\calx$ be a test configuration compatible with $T$ so that $\Fut_{\chi}(Y_0,\xi,\zeta) <0$.  Then
\begin{equation}\label{eq:Calabi-lb2}
\Cal_{\xi,\ol{J}}(g) ^2 \geq c(n)^2 \Bigl( \frac{\Fut_{\chi}(Y_0,\xi,\zeta)^2}{\|\zeta\|^2}  +\|\chi\|^2 \Bigr),
\end{equation}
for any $(\eta,\xi,\Phi,g) \in\cS(\xi,\ol{J})$.

In particular, for any $T$-invariant $(\eta,\xi,\Phi,g) \in\cS(\xi,\ol{J})$
\begin{equation*}
\Bigl(\int_M (S_g -\ol{S}_g  -h_g^{\chi})^2 \, d\mu_g \Bigr)^{1/2} \geq -c(n) \frac{\Fut_{\chi}(Y_0,\xi,\zeta)}{\|\zeta\|},
\end{equation*}
where $h_g^{\chi}$ is the normalized holomorphy potential for $\chi$ with respect to $g$.
\end{thm}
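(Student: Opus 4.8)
The plan is to derive (\ref{eq:Calabi-lb2}) from two independent ingredients: an exact orthogonal splitting of the Calabi energy that isolates the term $c(n)^2\|\chi\|^2$, and a relative refinement of the Collins--Sz\'ekelyhidi lower bound (\ref{eq:Calabi-CoSz}) controlling the leftover by $\Fut_{\chi}(Y_0,\xi,\zeta)/\|\zeta\|$. Granting these, the theorem is immediate, and the ``in particular'' clause is exactly the second ingredient read off over $T$-invariant metrics.

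First I would establish, for every $(\eta,\xi,\Phi,g)\in\cS(\xi,\ol{J})$, the Pythagorean identity
\[
\int_M (S_g-\ol{S}_g)^2\, d\mu_g \;=\; \int_M \bigl(S_g-\ol{S}_g - h_g^{\chi}\bigr)^2\, d\mu_g \;+\; c(n)^2\|\chi\|^2 .
\]
After expanding the square this reduces to the vanishing of the cross term $\int_M (S_g-\ol{S}_g-h_g^{\chi})\,h_g^{\chi}\, d\mu_g$. For $T$-invariant $g$ this is clear, since $\chi$ being the transversely extremal vector field means $h_g^{\chi}$ is the $L^2(d\mu_g)$-projection of $S_g-\ol{S}_g$ onto the holomorphy potentials of $\gt$, so the residual $S_g-\ol{S}_g-h_g^{\chi}$ is orthogonal to $h_g^{\chi}$. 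For general $g$ one instead invokes the metric-independence of the Futaki--Mabuchi pairing \cite{FuMa95}: both $\int_M (S_g-\ol{S}_g)\,h_g^{\chi}\, d\mu_g = \Fut(\chi)$ and $\int_M (h_g^{\chi})^2\, d\mu_g$ equal $\langle\chi,\chi\rangle$, which cancels the cross term, while the norm identification behind (\ref{eq:Calabi-lb1}) gives $\langle\chi,\chi\rangle = c(n)^2\|\chi\|^2$. This is the step that upgrades the main inequality from $T$-invariant to arbitrary metrics.

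Next I would prove the reduced lower bound
\[
\Bigl(\int_M \bigl(S_g-\ol{S}_g-h_g^{\chi}\bigr)^2\, d\mu_g\Bigr)^{1/2} \;\geq\; -\,c(n)\,\frac{\Fut_{\chi}(Y_0,\xi,\zeta)}{\|\zeta\|},
\]
following the Collins--Sz\'ekelyhidi proof of (\ref{eq:Calabi-CoSz}) \cite{CoSz12} with the transverse K-energy replaced throughout by the extremal-modified (relative) K-energy $\cM_{\chi}$. Since $\calx$ is compatible with $T$, it generates a weak geodesic ray $\phi_t$ of transverse K\"ahler potentials along which $\cM_{\chi}$ is convex, with $\frac{d}{dt}\cM_{\chi}(\phi_t) = -\int_M \dot\phi_t\,\bigl(S(\phi_t)-\ol{S}-h_{\phi_t}^{\chi}\bigr)\, d\mu_t$ and with asymptotic slope governed by $\Fut_{\chi}(Y_0,\xi,\zeta)$ (the algebraic slope matching the analytic one up to $c(n)$). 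Convexity forces the initial derivative to be dominated by the slope, and Cauchy--Schwarz together with $\|\dot\phi_t\|_{L^2}=\|\zeta\|$ turns this into the displayed bound; it does not require $g$ to be $T$-invariant.

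Finally, combining the two, one substitutes the square of the reduced bound (legitimate since $\Fut_{\chi}(Y_0,\xi,\zeta)<0$) into the Pythagorean identity to get
\[
\Cal_{\xi,\ol{J}}(g)^2 \;\geq\; c(n)^2\,\frac{\Fut_{\chi}(Y_0,\xi,\zeta)^2}{\|\zeta\|^2} \;+\; c(n)^2\|\chi\|^2,
\]
which is (\ref{eq:Calabi-lb2}); the ``in particular'' statement is the reduced bound itself specialized to $T$-invariant $g$, for which $S_g-\ol{S}_g-h_g^{\chi}$ is genuinely the component of the scalar curvature orthogonal to the extremal direction. I expect the main obstacle to be the second step: establishing, in the transverse K\"ahler setting and for a possibly irrational extremal field $\chi$, both the convexity of $\cM_{\chi}$ along the geodesic ray and the precise identification of its asymptotic slope with $\Fut_{\chi}$ including the constant $c(n)$. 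This is where the quantization/approximation machinery of \cite{CoSz12} must be transplanted to the modified functional, and where the relative theory genuinely goes beyond the unrelativized bound (\ref{eq:Calabi-CoSz}).
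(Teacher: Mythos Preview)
Your argument is correct in outline, but it takes a genuinely different and more laborious route than the paper. You split the proof into an analytic Pythagorean identity plus a \emph{relative} Donaldson--Collins--Sz\'ekelyhidi bound built from the modified K-energy $\cM_{\chi}$; this works, but step~2 forces you to rebuild the whole convexity/asymptotic-slope machinery of \cite{CoSz12} for $\cM_{\chi}$, which is exactly the obstacle you flag at the end.

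The paper avoids all of that by a purely algebraic trick on the side of the test configuration, reducing to the \emph{unrelativized} bound (\ref{eq:Calabi-CoSz}) already available. One first replaces $\zeta$ by $\ol{\zeta}=\zeta-\lambda\chi$ with $\langle\ol{\zeta},\chi\rangle=0$, so that $\Fut(Y_0,\xi,\ol{\zeta})=\Fut_{\chi}(Y_0,\xi,\zeta)<0$. Rescaling by $\mu>0$ so that $\Fut(Y_0,\xi,\mu\ol{\zeta})=-\|\mu\ol{\zeta}\|^2$ and setting $\gamma=\mu\ol{\zeta}-\chi$, one gets (using orthogonality) $\Fut(Y_0,\xi,\gamma)=-\|\gamma\|^2$ and hence
\[
\frac{\Fut(Y_0,\xi,\gamma)^2}{\|\gamma\|^2}=\|\mu\ol{\zeta}\|^2+\|\chi\|^2=\frac{\Fut_{\chi}(Y_0,\xi,\zeta)^2}{\|\ol{\zeta}\|^2}+\|\chi\|^2.
\]
Applying (\ref{eq:Calabi-CoSz}) to the (twisted product) test configuration generated by $\gamma$ and using $\|\ol{\zeta}\|\leq\|\zeta\|$ gives (\ref{eq:Calabi-lb2}) directly. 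The ``in particular'' statement then drops out from (\ref{eq:Calabi-lb2}) together with the $T$-invariant case of your Pythagorean identity, which is just the orthogonal projection; no modified K-energy is needed.

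What each approach buys: the paper's route is a three-line reduction that reuses (\ref{eq:Calabi-CoSz}) as a black box and sidesteps any analysis of $\cM_{\chi}$ (including the irrational-$\chi$ issue, handled by the same approximation already used for (\ref{eq:Calabi-lb1})). Your route is conceptually transparent---it exhibits the decomposition $\Cal^2=\|S_g-\ol{S}_g-h_g^{\chi}\|_{L^2}^2+c(n)^2\|\chi\|^2$ and proves the second inequality for \emph{all} metrics, not only $T$-invariant ones---but the price is redoing the entire quantization/geodesic argument of \cite{CoSz12} for the modified functional.
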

\begin{proof}
Choose a constant $\lambda$ so that $\ol{\zeta} =\zeta -\lambda\chi$ satisfies $\langle\ol{\zeta},\chi\rangle=0$.
Then $\Fut(Y_0,\xi,\ol{\zeta}) =\Fut_{\chi}(Y_0,\xi,\zeta) <0$.   Choose $\mu>0$ so that 
$\Fut(Y_0,\xi,\mu\ol{\zeta}) =-\|\mu\ol{\zeta}\|^2$.  If we define $\gamma=\mu\ol{\zeta} -\chi$, then
\[ \Fut(Y_0 ,\xi,\gamma)=-\|\mu\ol{\zeta}\|^2 -\|\chi\|^2 =-\|\gamma\|^2,\]
and
\[ \frac{\Fut(Y_0,\xi,\gamma)}{\|\gamma\|^2} =\frac{\Fut_{\chi}(Y_0,\xi,\zeta)}{\|\ol{\zeta}\|^2} +\|\chi\|^2 . \]
The inequality then follows from $\|\ol{\zeta}\|\leq\|\zeta\|$ and (\ref{eq:Calabi-CoSz}).
\end{proof}

The following easily follows from (\ref{eq:Calabi-lb2}).
\begin{cor}
If we have equality in (\ref{eq:Calabi-lb1}), in particular if there exists a Sasaki-extremal structure in $\cS(\xi,\ol{J})$, then 
$(Y,\xi)$ is K-semistable relative to any maximal torus $T$.
\end{cor}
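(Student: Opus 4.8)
The plan is to prove the statement by contradiction, feeding a hypothetical destabilizer into the sharpened Calabi bound (\ref{eq:Calabi-lb2}). Suppose $(Y,\xi)$ were \emph{not} K-semistable relative to the maximal torus $T$. By definition there would then exist a $T$-equivariant test configuration $\calx$, with central fiber $Y_0$ and generator $\zeta\in\Lie(\C^*)$, for which $\Fut_{\chi}(Y_0,\xi,\zeta)<0$. I would first observe that this strict negativity is exactly the hypothesis under which (\ref{eq:Calabi-lb2}) was established, so the sharpened bound becomes available for this particular configuration.

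Next I would apply (\ref{eq:Calabi-lb2}) to the destabilizer: for every $(\eta,\xi,\Phi,g)\in\cS(\xi,\ol{J})$ one has
\[ \Cal_{\xi,\ol{J}}(g)^2 \geq c(n)^2\Bigl( \frac{\Fut_{\chi}(Y_0,\xi,\zeta)^2}{\|\zeta\|^2} + \|\chi\|^2 \Bigr). \]
Taking the infimum over $g$ and invoking the equality hypothesis $\inf_g \Cal_{\xi,\ol{J}}(g)=c(n)\|\chi\|$ from (\ref{eq:Calabi-lb1}) — here I use that $\Cal\geq 0$, so $\inf \Cal^2 = (\inf\Cal)^2$ — the left-hand side collapses to $c(n)^2\|\chi\|^2$, giving
\[ c(n)^2\|\chi\|^2 \geq c(n)^2\|\chi\|^2 + c(n)^2\,\frac{\Fut_{\chi}(Y_0,\xi,\zeta)^2}{\|\zeta\|^2}. \]
Cancelling the common term forces $\Fut_{\chi}(Y_0,\xi,\zeta)^2/\|\zeta\|^2\leq 0$, which is incompatible with $\Fut_{\chi}(Y_0,\xi,\zeta)<0$. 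This contradiction shows no destabilizer exists, i.e.\ $\Fut_{\chi}(Y_0,\xi,\zeta)\geq 0$ for every $T$-equivariant test configuration, which is precisely K-semistability relative to $T$. For the ``in particular'' clause, I would note that the existence of a Sasaki-extremal structure in $\cS(\xi,\ol{J})$ already yields equality in (\ref{eq:Calabi-lb1}) by the extremality characterization of the earlier theorem, so that case is subsumed under the equality hypothesis.

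I do not expect a serious obstacle: the analytic content is entirely front-loaded into (\ref{eq:Calabi-lb2}), and what remains is essentially a one-line cancellation. The only points meriting a word of care are logical and bookkeeping rather than substantive. First, (\ref{eq:Calabi-lb2}) is available \emph{only} because we assumed $\Fut_{\chi}<0$, so the contradiction framing is what unlocks it. Second, one must check $\|\zeta\|\neq 0$ so the quotient is meaningful; this holds because both $\Fut_{\chi}(Y_0,\xi,\cdot)$ and the bilinear form $\langle\cdot,\cdot\rangle_{\xi}$ vanish on $\R\xi$, so $\zeta\in\R\xi$ would force $\Fut_{\chi}(Y_0,\xi,\zeta)=0$, contrary to assumption; hence $\zeta$ is not proportional to $\xi$ and its norm is strictly positive.
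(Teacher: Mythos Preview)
Your argument is correct and matches the paper's approach exactly: the paper simply states that the corollary ``easily follows from (\ref{eq:Calabi-lb2})'', and what you have written is precisely the natural way to unpack that remark via a contradiction argument. Your bookkeeping remarks about $\|\zeta\|\neq 0$ and about $\inf\Cal^2=(\inf\Cal)^2$ are sensible side observations, though the paper does not spell these out.
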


\section{Modified Lichnerowicz Obstruction}

We will give a modified version of the Lichnerowicz obstruction of~\cite{GMSY06}, which will be useful in obstructing Sasaki-extremal metrics 
in examples constructed from hypersurface singularities.  

A necessary condition that  $(M,\eta,\xi,\Phi,g)$, with polarized cone $(Y,\xi)$, admits a Sasaki-Einstein metric is that $c_1(Y)=0$ (equivalently $c_1(\cald)=0$), and 
$o\in Y$ is a $\bbq$-Gorenstein singularity.   In fact, a stronger condition must hold.   One must have
\begin{equation}\label{eq:CY-con}
c_1(\calf_{\xi})=(n+1)[d\eta]_B.
\end{equation}
One can easily show, assuming $M$ is simply connected, that (\ref{eq:CY-con}) is equivalent to the existence 
of a non-vanishing $\Omega\in\Gamma\bigl( \Lambda^{n+1,0} C(M)\bigr)$ with 
\begin{equation}\label{cheqn}
\cL_\xi \Omega =\sqrt{-1}(n+1)\Omega.
\end{equation}
(cf.~\cite{FOW06}).   If $M$ is not simply connected, then $\Omega$ can be defined as multivalued.
Following \cite{MaSpYau06}  we define
\begin{equation}\label{chhyper}
\grS=\{\xi\in\gt^+~|~\cL_\xi \Omega =\sqrt{-1}(n+1)\Omega\}.
\end{equation}

We recall the Lichnerowicz obstruction of Gauntlett, Martelli, Sparks, and Yau.
\begin{thm}[\cite{GMSY06}]\label{thm:Lich}
Suppose $g$ is a holomorphic function on $Y$ of charge $0<\lambda <1$, i.e.
\[ \cL_{\xi} g=\sqrt{-1}\lambda g,\]
then $(Y,\xi)$ admits no Ricci-flat K\"{a}hler cone metric with Reeb vector field $\xi$.
\end{thm}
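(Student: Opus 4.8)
The plan is to reproduce the eigenvalue argument of \cite{GMSY06}, which rests on the Lichnerowicz--Obata lower bound for the first eigenvalue of the Laplacian on an Einstein manifold. Suppose, for contradiction, that $(Y,\xi)$ carries a Ricci-flat K\"ahler cone metric $\bar g = dr^2 + r^2 g$; equivalently, the link $M^{2n+1}$ is Sasaki--Einstein with $\Ric_g = 2n\,g$. Denote the given holomorphic function by $f$ (the ``$g$'' of the statement), so that $\cL_\xi f = \sqrt{-1}\,\lambda f$ with $0<\lambda<1$.

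First I would convert the $\xi$-charge into homogeneity in $r$. Writing $J$ for the complex structure of the cone, the Euler field $r\d_r$ and the Reeb field are related by $\xi = J(r\d_r)$, and holomorphicity of $f$ gives $df(JX)=\sqrt{-1}\,df(X)$; applied to $X = r\d_r$ this yields $\cL_\xi f = \sqrt{-1}\,\cL_{r\d_r} f$, so that $\cL_\xi f = \sqrt{-1}\lambda f$ forces $r\d_r f = \lambda f$. Thus $f = r^\lambda \phi$ for a (possibly complex) function $\phi$ on the link $M$, and $\lambda>0$ makes $f$, hence $\phi$, non-constant.

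Next I would use that a holomorphic function on a K\"ahler manifold is harmonic, so $\Delta_{\bar g} f = 0$. Writing the cone Laplacian in its separated form $\Delta_{\bar g} = -\tfrac{1}{r^{2n+1}}\d_r\bigl(r^{2n+1}\d_r\bigr) + \tfrac{1}{r^2}\Delta_g$ and applying it to $f = r^\lambda\phi$ gives $\Delta_{\bar g} f = r^{\lambda-2}\bigl(\Delta_g \phi - \lambda(\lambda+2n)\phi\bigr)$. Hence $\phi$ is an eigenfunction of $\Delta_g$ with eigenvalue $\mu = \lambda(\lambda+2n)$; the real and imaginary parts of $\phi$ share this real eigenvalue, and at least one of them is non-constant.

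Finally I would invoke Lichnerowicz. Since $\Ric_g = 2n\,g = (N-1)\,g$ with $N = 2n+1$, the first non-zero eigenvalue of $\Delta_g$ satisfies $\lambda_1 \geq 2n+1$. But for $0<\lambda<1$ we have $\mu = \lambda(\lambda+2n) < 1\cdot(1+2n) = 2n+1$, so $\mu$ is a positive eigenvalue strictly below the Lichnerowicz bound---a contradiction. Therefore no Ricci-flat K\"ahler cone metric with Reeb field $\xi$ exists. The one point demanding care is the cone/link Laplacian identity together with the matching of normalizations (the Einstein constant $2n$ against the homogeneity degree $\lambda$), which is precisely what makes the threshold $\lambda = 1$ sharp; the remainder of the argument is formal.
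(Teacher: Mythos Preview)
Your argument is correct and is precisely the original eigenvalue argument of Gauntlett--Martelli--Sparks--Yau: translate the $\xi$-charge into radial homogeneity via $\xi=J(r\d_r)$, use harmonicity of holomorphic functions on a K\"ahler manifold to separate variables on the cone, and confront the resulting eigenvalue $\lambda(\lambda+2n)$ with the Lichnerowicz bound $\lambda_1\geq 2n+1$ on the Sasaki--Einstein link. The dimension bookkeeping and the sharpness at $\lambda=1$ are handled correctly.

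There is nothing to compare against in the present paper: Theorem~\ref{thm:Lich} is only \emph{recalled} here, with attribution to \cite{GMSY06}, and no proof is supplied. What you have written is essentially the proof one finds in that reference (and in the exposition of \cite[Section 11.3.3]{BG05}), so your proposal is consistent with, rather than an alternative to, the cited source.
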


\begin{rem}\label{lamda1}
The Lichnerowicz obstruction was also refined by Obata to include $\grl=1$ in which case $(Y,\xi)$ must be the flat structure $(\bbc^{n+1},\xi_0)$. See Section 11.3.3 of \cite{BG05} and references therein.
\end{rem}

Suppose that $f$ is a weighted homogeneous holomorphic function with weight $\alpha\in\gt^*$.
Let $Y\cong Y_1\subset\Upsilon\rightarrow\C$ be the deformation to the normal cone of $V=\{f=0\}\subset Y$.  

\begin{thm}\label{thm:gen-Lich}
The following expression holds:
\begin{multline}\label{eq:DF1}
\Fut_{\chi}(Y_0,\xi,\zeta) = \\ \frac{a_0(\xi)}{2}(1-\frac{1}{\alpha(\xi)}) -\frac{1}{(n+2)(n+1)^2}\frac{ D_{\chi} a_0(\xi)}{\alpha(\xi)} -\frac{1}{(n+2)(n+1)}\frac{a_0(\xi)\alpha(\chi)}{\alpha(\xi)^2}. 
\end{multline}
\end{thm}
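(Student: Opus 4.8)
The plan is to realize the test configuration via the extended Rees algebra of the principal ideal $I=(f)\subset\cH$ and to reduce every ingredient of $\Fut_{\chi}(Y_0,\xi,\zeta)=\Fut(Y_0,\xi,\zeta)-\langle\zeta,\chi\rangle$ to the single index character $F(\xi,t)$ of $Y$. First I would identify the central fibre. Since $f$ is $T$-homogeneous of weight $\alpha$ and a non-zero-divisor, the deformation to the normal cone is $\Upsilon=\operatorname{Spec}\bigoplus_{k\in\Z}I^k t^{-k}$ (with $I^k=\cH$ for $k\le 0$), whose central fibre is $Y_0=\operatorname{Spec}\bigoplus_{j\ge0}I^j/I^{j+1}$. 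Multiplication by $f^j$ identifies $I^j/I^{j+1}\cong\cH/(f)=\cO_V$, so $Y_0\cong V\times\C$; the torus $T$ acts on the $j$-th summand with the weights of $\cO_V$ shifted by $j\alpha$, while the generator $\zeta$ of the test-configuration $\C^\ast$ acts on $I^j/I^{j+1}$ with weight $j$ (the normalization that recovers the Lichnerowicz sign, checked below).

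Next I would compute the index characters of $Y_0$. The sequence $0\to\cH\xrightarrow{\,f\,}\cH\to\cO_V\to0$ shows that the $\xi$-index character of $\cO_V$ is $(1-q)F(\xi,t)$ with $q=e^{-t\alpha(\xi)}$. Summing over $j$ the shifted copies — weighting by $1$, by the $\zeta$-weight $j$, and by the $\chi$-weight — collapses the resulting geometric series; most efficiently, for the Reeb field $\xi+u\zeta+v\chi$ one obtains the closed form
\[
F_{Y_0}(\xi+u\zeta+v\chi,t)=\frac{(1-\hat{q})\,F(\xi+v\chi,t)}{1-e^{-tu}\hat{q}},\qquad \hat{q}=e^{-t\alpha(\xi+v\chi)}.
\]
In particular $F_{Y_0}(\xi,t)=F(\xi,t)$, confirming flatness, so $a_0,a_1$ on $Y_0$ are those of $Y$.

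Then I would extract the asymptotic coefficients. The leading term of the display gives
\[
a_0^{Y_0}(\xi+u\zeta+v\chi)=\frac{\alpha(\xi+v\chi)}{u+\alpha(\xi+v\chi)}\,a_0(\xi+v\chi),
\]
and differentiating, via (\ref{eq:coef-der}), yields $b_0(\zeta)=a_0/((n+1)\alpha(\xi))$, $b_0(\chi)=-\tfrac{1}{n+1}D_\chi a_0$, and $c_0(\zeta,\chi)=\tfrac{1}{(n+1)(n+2)}\bigl(\alpha(\chi)a_0/\alpha(\xi)^2-D_\chi a_0/\alpha(\xi)\bigr)$; the Bernoulli expansion of $q/(1-q)$ applied to $C_\zeta(\xi,t)=\tfrac{q}{1-q}F(\xi,t)$ gives $b_1(\zeta)=a_1/(n\alpha(\xi))-a_0/2$. (As a check, the $\partial_v$-derivative reproduces the expected $b_0(\chi)$, confirming the closed form.)

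Assembling, $\Fut(Y_0,\xi,\zeta)=\tfrac{a_1}{a_0}b_0(\zeta)-b_1(\zeta)=\tfrac{a_0}{2}-\tfrac{a_1}{n(n+1)\alpha(\xi)}$. Here I invoke the standing hypothesis $\xi\in\grS$ of (\ref{eq:CY-con}): as for the flat model $\C^{n+1}$, where $F=(1-e^{-t})^{-(n+1)}$ forces $a_1/a_0=n(n+1)/2$, Gorenstein duality with $\Omega$ of charge $n+1$ gives this same ratio, so $\Fut(Y_0,\xi,\zeta)=\tfrac{a_0}{2}\bigl(1-1/\alpha(\xi)\bigr)$ — negative exactly for $0<\alpha(\xi)<1$, which matches Theorem \ref{thm:Lich} and pins down the sign of $\zeta$. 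Finally $\langle\zeta,\chi\rangle=c_0(\zeta,\chi)-b_0(\zeta)b_0(\chi)/a_0$ evaluates to $\tfrac{a_0\alpha(\chi)}{(n+1)(n+2)\alpha(\xi)^2}+\tfrac{D_\chi a_0}{(n+1)^2(n+2)\alpha(\xi)}$, and subtracting it from $\Fut(Y_0,\xi,\zeta)$ produces exactly (\ref{eq:DF1}). The main obstacle is the bookkeeping of the combined $T\times\C^\ast$-weights on $Y_0$ needed to derive the closed form above — and with it the mixed coefficient $c_0(\zeta,\chi)$ — together with the correct normalization and sign of $\zeta$; the appeal to $a_1/a_0=n(n+1)/2$ is the one point where the Calabi–Yau hypothesis $\xi\in\grS$ genuinely enters.
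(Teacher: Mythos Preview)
Your proposal is correct and follows essentially the same approach as the paper: both compute $\Fut(Y_0,\xi,\zeta)$ and $\langle\zeta,\chi\rangle$ by extracting the coefficients $\hat a_0,\hat a_1$ of the index character of the central fibre, differentiating via (\ref{eq:coef-der}) to obtain $b_0,b_1,c_0$, and invoking the Calabi--Yau identity $a_1=\tfrac{n(n+1)}{2}a_0$ for $\xi\in\grS$. The only cosmetic differences are that you rederive the closed form $F_{Y_0}(\xi+u\zeta+v\chi,t)=\dfrac{(1-\hat q)F(\xi+v\chi,t)}{1-e^{-tu}\hat q}$ from the Rees-algebra description (the paper cites \cite{CoSz12} for the equivalent expansion of $\hat a_0,\hat a_1$), and you obtain $b_1(\zeta)$ by the Bernoulli expansion of $C_\zeta=\tfrac{q}{1-q}F(\xi,t)$ rather than by differentiating $\hat a_1$.
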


Notice that the righthand side is independent of the choice of $\chi$ modulo $\xi$.  If we choose $\chi\in\gt$ tangent to $\grS$, then this can be written as
\begin{multline}\label{eq:DF2}
\Fut_{\chi}(Y_0,\xi,\zeta) =\\ \frac{a_0(\xi)}{2}(1-\frac{1}{\alpha(\xi)}) -\frac{2}{(n+2)(n+1)^2}\frac{ \|\chi \|^2}{\alpha(\xi)} -\frac{1}{(n+2)(n+1)}\frac{a_0(\xi)\alpha(\chi)}{\alpha(\xi)^2} .
\end{multline}
\begin{proof}
It was shown in~\cite{CoSz12} that the Hilbert series of the central fiber $Y_0$ of $\Upsilon$ is 
\[ F(\xi+s\zeta) =\frac{a_0(\xi)\alpha(\xi)n!}{(\alpha(\xi)+s) t^{n+1}} +\frac{\alpha(\xi)(a_1(\xi)+\frac{sn}{2}a_0(\xi))(n-1)!}{(\alpha(\xi)+s) t^n} +O(t^{1-n}).\]
We denote the coefficients of the series as 
\[ \hat{a}_0(\xi +s\zeta)  =\frac{a_0(\xi)\alpha(\xi)}{\alpha(\xi) +s},\quad \hat{a}_1 (\xi +s\zeta)  =\frac{\alpha(\xi)\bigl(a_1(\xi) +\frac{sn}{2}a_0(\xi)  \bigr)}{ \alpha(\xi) +s}. \]
Then (\ref{eq:coef-der}) gives 
\[ \hat{b}_0 (\xi)=\frac{1}{n+1} \frac{a_0(\xi)}{\alpha(\xi)}, \quad \hat{b}_1(\xi) = \frac{1}{n}\bigl(\frac{a_1(\xi)}{\alpha(\xi)}-\frac{n}{2} a_0(\xi) \bigr).\]
We compute 
\[\begin{split}
\Fut(Y_0, \xi,\zeta)  & =\frac{1}{n+1}\frac{a_1(\xi)}{\alpha(\xi)} +\frac{1}{n}\bigl( \frac{n}{2} a_0(\xi) -\frac{a_1(\xi)}{\alpha(\xi)}\bigr) \\
			& = \frac{a_0(\xi)}{2}\bigl( 1-\frac{2}{n(n+1)} \frac{a_1(\xi)}{a_0(\xi) \alpha(\xi)} \bigr).
\end{split}\]
But the Calabi-Yau condition (\ref{eq:CY-con}) implies for $\xi\in\grS$ that $a_1(\xi) =\frac{n(n+1)}{2} a_0(\xi)$~\cite{CoSz12}, so we get 
\begin{equation}\label{eq:DF-nc}
\Fut(Y_0, \xi,\zeta) = \frac{a_0(\xi)}{2}\bigl( 1-\frac{1}{\alpha(\xi)}\bigr),
\end{equation}
which was originally obtained in~\cite{CoSz12}. 

Similarly, using (\ref{eq:coef-der}), we compute
\begin{equation}\label{eq:F-nc}
\begin{split}
\langle\zeta,\chi\rangle & = \frac{1}{(n+1)(n+2)} D_{\zeta} D_{\chi} \hat{a}_0 -\frac{1}{(n+1)^2} \frac{1}{a_0(\xi)} D_{\zeta} \hat{a}_0 D_{\chi} a_0 \\
				& = \frac{-1}{(n+1)(n+2)} D_{\chi} \Bigl( \frac{a_0(\xi)}{\alpha(\xi)}\Bigr) +\frac{1}{(n+1)^2} \frac{1}{\alpha(\xi)} D_{\chi} a_0(\xi) \\
				& = \frac{1}{(n+1)^2 (n+2)} \frac{D_{\chi} a_0(\xi)}{\alpha(\xi)} + \frac{1}{(n+1)(n+2)} \frac{a_0(\xi)\alpha(\chi)}{\alpha(\xi)^2},
\end{split}
\end{equation}
and \eqref{eq:DF2} follows from \eqref{eq:DF-nc} and \eqref{eq:F-nc}.  

To prove \eqref{eq:DF2} first observe that \eqref{eq:DF1} is independent of the choice of $\chi$ in $\Lie(T)/{\R\xi}$.  This follows easily from
$D_{\xi} a_0 =-(n+1) a_0(\xi)$.  It is well known that if $\chi$ is tangent to $\grS$ then $\frac{1}{2}D_{\chi} a_0(\xi) =\Fut(Y,\xi,\chi)$.   Thus by the definition
of $\chi$ we have $\frac{1}{2} D_{\chi} a_0(\xi) =\|\chi\|^2$, and \eqref{eq:DF2} follows.

\end{proof}

Since $\xi\in\grS$ Equation \eqref{eq:DF1} implies

\begin{cor}\label{cor:gen-Lich}
If $f$ is a homogeneous holomorphic function with weight $\alpha\in\gt^*$ satisfying the Lichnerowicz condition in Theorem~\ref{thm:Lich} and 
$\alpha|_{T\Sigma} =0$, then the entire Sasaki cone is obstructed from admitting extremal Sasaki metrics.
\end{cor}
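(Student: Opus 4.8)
The plan is to produce, at every Reeb field of the cone, a test configuration whose relative Donaldson--Futaki invariant is strictly negative, and then to invoke the contrapositive of the Corollary following~\eqref{eq:Calabi-lb2}: if $(Y,\xi)$ fails to be K-semistable relative to a maximal torus $T$, then $\cS(\xi,\ol{J})$ can contain no Sasaki-extremal structure. The test configuration will always be the deformation to the normal cone of $V=\{f=0\}$, for which Theorem~\ref{thm:gen-Lich} supplies the invariant explicitly.

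First I would unwind the hypothesis $\alpha|_{T\grS}=0$. The locus $\grS$ of \eqref{chhyper} is the slice of $\gt^+$ cut out by the affine equation $w_\Omega(\xi)=n+1$, where $w_\Omega\in\gt^*$ is the weight of $\Omega$ in \eqref{cheqn}; hence its tangent space is $\ker w_\Omega$. Requiring $\alpha$ to annihilate $T\grS$ therefore forces $\alpha$ to be proportional to $w_\Omega$, so the charge $\lambda:=\alpha(\xi)$ is \emph{constant} along $\grS$ and, by the Lichnerowicz hypothesis of Theorem~\ref{thm:Lich}, satisfies $0<\lambda<1$ for every $\xi\in\grS$.

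Next, fix $\xi\in\grS$. Since $\chi$ is only defined modulo $\R\xi$ and $w_\Omega(\xi)=n+1\neq 0$, I may choose the representative with $w_\Omega(\chi)=0$, i.e.\ $\chi$ tangent to $\grS$, so that \eqref{eq:DF2} is available; the assumption $\alpha|_{T\grS}=0$ then gives $\alpha(\chi)=0$ and kills its last term, leaving
\[
\Fut_\chi(Y_0,\xi,\zeta)=\frac{a_0(\xi)}{2}\Bigl(1-\frac{1}{\lambda}\Bigr)-\frac{2}{(n+2)(n+1)^2}\frac{\|\chi\|^2}{\lambda}.
\]
Here $a_0(\xi)>0$ (it is proportional to the Sasaki volume), $\|\chi\|^2\geq 0$ since the form \eqref{eq:norm} is positive semidefinite, and $0<\lambda<1$ makes $1-\frac{1}{\lambda}<0$; so the first term is strictly negative and the second nonpositive, whence $\Fut_\chi(Y_0,\xi,\zeta)<0$. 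Thus $(Y,\xi)$ is not K-semistable relative to $T$, and no Sasaki-extremal structure exists in $\cS(\xi,\ol{J})$ for any $\xi\in\grS$.

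It remains to pass from $\grS$ to all of $\gt^+$. The existence of an extremal representative is unchanged under the transverse homothety $\xi\mapsto c\xi$, which merely rescales the transverse K\"ahler metric and its scalar curvature, so the extremal set $\ge$ is a union of rays; since the charge of $\Omega$ is positive on $\gt^+$, each ray meets the slice $\grS$ exactly once. As no point of $\grS$ carries an extremal structure, $\ge=\emptyset$. I expect the main obstacle to be exactly this reduction: one must verify that $\chi$ can legitimately be taken tangent to $\grS$ so that \eqref{eq:DF2} applies, and that extremality genuinely descends along rays, so that obstructing the cross-section $\grS$ obstructs the entire cone.
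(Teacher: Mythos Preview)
Your proof is correct and follows the same line as the paper's, which records the corollary with the single phrase ``Since $\xi\in\grS$ Equation \eqref{eq:DF1} implies'' and leaves the rest to the reader. You have supplied precisely the details that one-liner suppresses: taking $\chi$ tangent to $\grS$ so that \eqref{eq:DF2} applies, reading $\alpha(\chi)=0$ from the hypothesis $\alpha|_{T\grS}=0$, observing that $a_0(\xi)>0$ and $\|\chi\|^2\ge 0$ force the expression to be strictly negative when $0<\alpha(\xi)<1$, and then passing from $\grS$ to the full cone by the scale-invariance of extremality. The two points you flag as ``main obstacles'' (legitimacy of choosing $\chi\in T\grS$ and the ray-reduction) are both routine and are implicitly assumed in the paper as well.
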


\section{Applications to Links of Weighted Homogeneous Polynomials}
Let $f$ be a weighted homogeneous polynomial in $\bbc^{n+1}$ of degree $d$ and weight vector $\bfw=(w_0,\ldots,w_n)$ with only an isolated singularity at the origin. 
It is well known \cite{BG05} that the {\it link} of $f$ defined by $L_f=\{f=0\}\cap S^{2n+1}$ is an $(n-2)$-connected smooth manifold with a natural Sasakian structure $\cals_\bfw=(\xi_\bfw,\eta_\bfw,\Phi_\bfw,g_\bfw)$, that we call the {\it standard} Sasakian structure. 
Furthermore, according to \cite{BGK05} the Sasaki automorphism group will be finite if $2w_i<d$ for all but one $i=0,\ldots,n$. So we assume to the contrary that 
$2w_i\geq d$ for at least two of the $i=1,\ldots,n$. In fact, we shall assume that $f$ has the form
\begin{equation}\label{Feqn}
f(z_0,\ldots,z_n)=f'(z_0,\ldots,z_k)+z_{k+1}^2+\cdots +z_n^2
\end{equation}
with $n-k\geq 2$ and all weights $w_i$ with $i=0,\ldots,k$ satisfy $2w_i<d'$, the degree of $f'$. In this case the connected component of the Sasaki automorphism group is 
$U(1)\times SO(n-k)$. For convenience we order the weights 
$$w_0\leq w_1\leq \cdots\leq w_k.$$
We shall also assume that there is no linear factor as this implies that the link is a standard sphere. First we describe the Sasaki cone.

\begin{prop}\label{sasconeprop}
Let $f$ be a weighted homogeneous polynomial of the form of Equation \eqref{Feqn}. The Sasaki cone $\gt^+$ of the link $L_f$ is given by 
\begin{equation}\label{Fsascone}
\gt^+=\{b_0\xi_\bfw+\sum_{j=1}^rb_j\grz_j\in \gt~|~b_0>0,~-\frac{db_0}{4}<b_j<\frac{db_0}{4}\}
\end{equation}
where $\xi_\bfw$ is the standard Reeb field on $L_f$.
\end{prop}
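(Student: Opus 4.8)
The plan is to identify the Sasaki cone with the Reeb cone of Equation \eqref{eq:Sas-cone2}, so the task reduces to computing the weights $\cW\subset\gt^*$ of the $T$-action on $\cH=H^0(Y,\cO_Y)$ and then writing down the condition $\alpha(\xi)>0$ for all nonzero $\alpha$. First I would fix the torus. Since $f$ has the form \eqref{Feqn} with $n-k\geq 2$ and the weights $w_0,\dots,w_k$ satisfy $2w_i<d'$, the connected automorphism group is $U(1)\times SO(n-k)$, as recalled just above the statement. A maximal torus $T$ therefore has rank $r+1$, where $r=\lfloor (n-k)/2\rfloor$ is the rank of $SO(n-k)$: one circle factor is the weighted $U(1)$ generated by the standard Reeb field $\xi_\bfw$ acting with weight $w_i$ on $z_i$, and the remaining $r$ circles come from a maximal torus of $SO(n-k)$ acting on the quadratic block $z_{k+1}^2+\cdots+z_n^2$. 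I would introduce generators $\grz_1,\dots,\grz_r$ for these latter circles, with $\grz_j$ acting on the coordinate plane spanned by $z_{k+2j-1},z_{k+2j}$ by the standard rotation.

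Next I would describe $Y$ concretely as the affine variety $\{f=0\}\subset\bbc^{n+1}$ and its coordinate ring $\cH=\bbc[z_0,\dots,z_n]/(f)$, so that the weights $\cW$ are generated by the weights of the coordinate functions $z_i$ together with the relations forced by $f$. Under $\xi=b_0\xi_\bfw+\sum_j b_j\grz_j$ the coordinate $z_i$ for $i\leq k$ has weight $b_0 w_i$ (these are unaffected by the $SO(n-k)$ factor), while the rotated combinations of $z_{k+1},\dots,z_n$ have weight $\tfrac{d}{2}b_0\pm b_j$, since each $z_\ell$ in the quadratic block carries weight $d/2$ under $\xi_\bfw$ (because $z_\ell^2$ has degree $d$) and weight $\pm 1$ under the appropriate $\grz_j$. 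The Reeb cone condition $\alpha(\xi)>0$ on all nonzero weights then splits into two families: the conditions $b_0 w_i>0$ from the $i\leq k$ coordinates, which give $b_0>0$ once the overall positivity is fixed; and the conditions $\tfrac{d}{2}b_0\pm b_j>0$ from the quadratic block, which rearrange exactly to $-\tfrac{d}{2}b_0<b_j<\tfrac{d}{2}b_0$.

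The main obstacle is the factor-of-two discrepancy between the rotation weight $\pm 1$ I wrote and the bound $\tfrac{d}{4}b_0$ appearing in \eqref{Fsascone}: I expect the correct accounting to assign weight $\pm\tfrac12$ to each complexified eigenvector of the $SO(2)$ rotation (equivalently, the eigenvalues of the rotation generator on $z_{k+2j-1}\pm i z_{k+2j}$ are $\pm 1$ but enter the Reeb field with a factor of $\tfrac12$ from the real-to-complex normalization fixed in the proof of the Proposition comparing \eqref{eq:Sas-cone1} and \eqref{eq:Sas-cone2}), so that the positivity conditions become $\tfrac{d}{2}b_0\pm 2b_j>0$, i.e. $-\tfrac{d}{4}b_0<b_j<\tfrac{d}{4}b_0$. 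Getting this normalization exactly right — reconciling the weighted-sphere conventions of \cite{BG05} with the $SO(n-k)$ root/weight normalization — is the delicate step, and I would pin it down by checking a single explicit low-dimensional case (e.g. the $A_1$ block $z_{k+1}^2+z_{k+2}^2$) against the known Sasaki cone. Once the weights are correctly normalized, I would verify that no further inequalities arise: any nonzero weight $\alpha\in\cW$ is a nonnegative integer combination of the coordinate weights, so positivity on the generators already listed implies $\alpha(\xi)>0$ for all of $\cW\setminus\{0\}$, and the description \eqref{Fsascone} follows. Finally I would note that $b_0>0$ is both necessary (taking $\alpha$ the weight of $z_0$) and, together with the band conditions on the $b_j$, sufficient, completing the identification of $\gt^+$ with the stated open polyhedral cone.
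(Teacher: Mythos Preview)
Your approach is essentially the paper's: fix the maximal torus, diagonalize the $SO(n-k)$ action by passing to the complex coordinates $u_j=z_{k+2j-1}+iz_{k+2j}$, $v_j=z_{k+2j-1}-iz_{k+2j}$ on which $\grz_j$ acts with weights $(1,-1)$, and then read off the Reeb cone from positivity of the coordinate weights. The paper packages the last step through the Hilbert series of $\bbc[z_0,\dots,z_n]/(f)$ and the coefficient $a_0(\xi)$ (invoking Theorem~3 of \cite{CoSz12} that the Sasaki cone is the domain on which $a_0$ is defined and positive), but the content is identical: the factors in the denominator of the Hilbert series are exactly $1-e^{-w_ib_0 t}$ and $1-e^{-(\frac{d}{2}b_0\pm b_j)t}$, i.e.\ your coordinate weights.

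Your ``main obstacle'' paragraph is the one place you go wrong. Drop it entirely. Your direct computation of the weights $\tfrac{d}{2}b_0\pm b_j$ is correct and matches the paper's Hilbert series \eqref{Hevser}--\eqref{Hoddser} and the formula \eqref{a0eqn2} for $a_0$, which is positive precisely when $\tfrac{d^2b_0^2}{4}-b_j^2>0$, i.e.\ $|b_j|<\tfrac{d}{2}b_0$. There is no hidden factor of $\tfrac12$ from any real-to-complex normalization; the $SO(2)$ generator on a coordinate plane acts with eigenvalues $\pm 1$ on $u_j,v_j$ exactly as you first wrote. The bound $\tfrac{d}{4}b_0$ printed in the statement is inconsistent with the paper's own proof---the proof, like your argument, produces $\tfrac{d}{2}b_0$. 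Trust your computation; do not manufacture a normalization to match what appears to be a typo. (This bound plays no role in the subsequent applications, so nothing downstream is affected.)

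One small point worth tightening: when you argue that positivity on the coordinate weights forces positivity on every $\alpha\in\cW\setminus\{0\}$, note that the relation $f$ has weight $db_0$ under $\xi$ (and weight $0$ under each $\grz_j$), so the weight semigroup of $\cH=\bbc[z]/(f)$ is still generated by the weights of $z_0,\dots,z_k,u_j,v_j$ (and $z_n$ in the odd case). That justifies the reduction to the generating weights.
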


\begin{proof}
First note that the dimension of the Sasaki cone $\gt^+$ is $r+1$ where $r=\lfloor\frac{n-k}{2}\rfloor$. We define variables 
$u_j=z_{k+j}+iz_{k+j+1}$ and $v_j=z_{k+j}-iz_{k+j+1}$ for $j=1,\ldots,r$ in which case $f$ takes the form
$$f(z_0,\ldots,z_n)=\begin{cases} 
                            f'(z_0,\ldots,z_k)+\sum_{j=1}^ru_jv_j, &\text{if $n-k$ is even}; \\
                            f'(z_0,\ldots,z_k)+\sum_{j=1}^ru_jv_j +z_n^2. & \text{if $n-k$ is odd.}
                            \end{cases}$$
Now the Sasaki cone $\gt^+$ is spanned by elements of the form 
\begin{equation}\label{tanform}
\xi=b_0\xi_\bfw+\sum_{j=1}^rb_j\grz_j
\end{equation}
where $\grz_j$ has weight $(1,-1)$ with respect to $(u_j,v_j)$ and $0$ elsewhere. Now it follows from Theorem 3 of \cite{CoSz12} that the Sasaki cone is determined by the domain of the smooth function $a_0(\xi)$ in the Laurant expansion of the index character $F(\xi,t)$ of Equation \eqref{a0eqn}. To find this we compute the Hilbert series for the ring $(R=\bbc[z_0,...,z_n])/I$ where $I$ is the ideal
generated by $f$, that is, $I=Rf$. See Proposition 4.3 of \cite{CoSz12}. The Hilbert series for $R$ is
$$\frac{1}{(1-e^{-w_{0}(\xi))t})}\cdots \frac{1}{(1- e^{-w_{n}(\xi))t})}$$ 
and that of $I$ is $1-e^{-W(\xi)t}$ where $W(\xi)$ is the weight of the polynomial $f$, that is, $W(\xi)=d$. This gives the Hilbert series of $R/I$ as 
\begin{equation}\label{Hevser}
F(\xi,t)=\frac{1-e^{-db_0t}}{\prod_{i=0}^k(1-e^{-w_ib_0t})\prod_{j=1}^r(1-e^{-(\frac{d}{2}b_0+b_j)t})(1-e^{-(\frac{d}{2}b_0-b_j)t})}
\end{equation}
if $n-k$ is even, and 
\begin{equation}\label{Hoddser}
F(\xi,t)=\frac{1-e^{-db_0t}}{\prod_{i=0}^k(1-e^{-w_ib_0t})\prod_{j=1}^r(1-e^{-(\frac{d}{2}b_0+b_j)t})(1-e^{-(\frac{d}{2}b_0-b_j)t})(1-e^{-\frac{d}{2}b_0t})}
\end{equation}
if $n-k$ is odd. Then using Equation \eqref{a0eqn} with $n\mapsto n-1$ we have for $n-k$ even
\begin{equation}\label{a0eqn2}
a_0(\xi)(n-1)!=\frac{d}{w_0\cdots w_kb_0^{k-1}(\frac{d^2b_0^2}{4}-b_1^2)\cdots (\frac{d^2b_0^2}{4}-b_r^2)}.
\end{equation}
So $a_0(\xi)$ is well defined and positive precisely for the range indicated. This finishes the proof when $n-k$ is even. There are similar expressions when $n-k$ is odd.
\end{proof}

\begin{thm}\label{whpobs}
Let $f$ be a weighted homogeneous polynomial of the form of Equation \eqref{Feqn} with no linear factors. Suppose further that the inequality holds
$$\sum_{i=0}^kw_i-w_0n+\frac{d}{2}(n-k-2)\geq 0.$$
Then there are no extremal Sasaki metrics in the entire Sasaki cone of the link $L_f$.
\end{thm}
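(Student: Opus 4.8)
The plan is to apply Corollary~\ref{cor:gen-Lich} with the obstructing function taken to be the coordinate $z_0$ of smallest weight. First I would record the relevant weight data on the affine cone $Y=\{f=0\}$, which has complex dimension $n$. In the coordinates $u_j,v_j$ of the proof of Proposition~\ref{sasconeprop}, each generator $\grz_j$ acts with weight $(1,-1)$ on $(u_j,v_j)$ and fixes $z_0,\dots,z_k$ (as well as the leftover $z_n$ when $n-k$ is odd). Hence the coordinate $z_0$ descends to a $T$-homogeneous holomorphic function on $Y$ with weight $\alpha$ satisfying $\alpha(\xi_\bfw)=w_0$ and $\alpha(\grz_j)=0$ for every $j$.

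Next I would pin down the Calabi--Yau slice $\grS$. Writing the holomorphic volume form as the Poincar\'e residue $\Omega=\mathrm{Res}\,\frac{dz_0\wedge\cdots\wedge dz_n}{f}$, one checks that $\Omega$ is invariant under each $\grz_j$ (since $du_j\wedge dv_j$ and $f$ are) and has charge $|\bfw|-d$ under $\xi_\bfw$, where $|\bfw|:=\sum_{i=0}^n w_i=\sum_{i=0}^k w_i+(n-k)\tfrac d2$. With the Calabi--Yau normalization (\ref{eq:CY-con}) reading $\cL_\xi\Omega=\sqrt{-1}\,n\,\Omega$ on the $n$-dimensional cone (the general $n+1$ replaced by $n$, exactly as in the proof of Proposition~\ref{sasconeprop}), the slice is the affine hyperplane $\grS=\{b_0=n/(|\bfw|-d)\}$, with tangent space $T\grS=\mathrm{span}\{\grz_1,\dots,\grz_r\}$. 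In particular $\alpha|_{T\grS}=0$, so the weight hypothesis of Corollary~\ref{cor:gen-Lich} holds, and the Lichnerowicz charge of $z_0$ at $\xi\in\grS$ is
\[ \lambda=\alpha(\xi)=\frac{n\,w_0}{|\bfw|-d}. \]
Because $n-k\ge 2$ the denominator $|\bfw|-d=\sum_{i=0}^k w_i+\tfrac d2(n-k-2)$ is automatically positive, so $\lambda>0$, and a direct rearrangement shows that $\lambda<1$ is equivalent to the strict form of the hypothesis $\sum_{i=0}^k w_i-n w_0+\tfrac d2(n-k-2)>0$.

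When the hypothesis is strict we have $0<\lambda<1$, so $z_0$ satisfies the Lichnerowicz condition of Theorem~\ref{thm:Lich} and Corollary~\ref{cor:gen-Lich} applies verbatim: the entire Sasaki cone of $L_f$ is obstructed, i.e.\ $\ge=\emptyset$. That obstructing the slice $\grS$ obstructs the whole cone uses only that $\ge$ is a union of rays (extremality is preserved under rescaling of the Reeb field) and that the positive linear functional $b_0\mapsto(|\bfw|-d)b_0$ meets every ray of $\gt^+$ exactly once in $\grS$.

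The delicate point, and the main obstacle, is the boundary case of equality, where $\lambda=1$ and Theorem~\ref{thm:Lich} does not directly apply. Here I would return to the explicit Donaldson--Futaki invariant. Choosing the representative of $\chi$ lying in $T\grS$ (possible since $\gt=\bbr\xi\oplus T\grS$) gives $\alpha(\chi)=0$, so the third term of (\ref{eq:DF2}) drops and, at $\lambda=1$, the first term vanishes, leaving
\[ \Fut_\chi(Y_0,\xi,\zeta)=-c_n\,\|\chi\|^2\le 0 \]
for a positive constant $c_n$ (the constants of (\ref{eq:DF2}) taken with $n$ replaced by $n-1$). If $\|\chi\|>0$ this is strictly negative, contradicting the relative K-semistability that any extremal metric would force by (\ref{eq:Calabi-lb1}); hence no extremal metric exists at such $\xi$. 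If instead $\|\chi\|=0$, then any extremal metric in $\cS(\xi,\ol{J})$ has constant scalar curvature and, being Calabi--Yau since $\xi\in\grS$, is Sasaki--Einstein; but then $\lambda=1$ together with Obata's refinement (Remark~\ref{lamda1}) would force $(Y,\xi)$ to be the flat $(\bbc^{n+1},\xi_0)$, excluded because $f$ has no linear factors. In either case $\grS$, and hence by the ray argument all of $\gt^+$, carries no extremal Sasaki metric, so $\ge=\emptyset$. Away from the boundary the result is thus a clean application of Corollary~\ref{cor:gen-Lich}; the equality case is what forces the extra Obata input and the split on whether the transverse extremal field $\chi$ vanishes.
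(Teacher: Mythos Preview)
Your proof is correct and follows the same route as the paper: take $g=z_0$, compute its Lichnerowicz charge $\lambda=\frac{nw_0}{|\bfw|-d}$ at the normalized Reeb field on $\grS$, observe that the weight $\alpha$ kills the tangent directions $\grz_j$ to $\grS$, and invoke Corollary~\ref{cor:gen-Lich}. Your treatment of the boundary case $\lambda=1$ is in fact more explicit than the paper's: the paper simply appeals to Remark~\ref{lamda1} (Obata) without spelling out the passage from Sasaki--Einstein rigidity to the nonexistence of \emph{extremal} metrics, whereas you supply that bridge by returning to \eqref{eq:DF2} with $\alpha(\chi)=0$ and splitting on whether $\|\chi\|$ vanishes.
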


\begin{proof}
We need to check that the conditions of Corollary \ref{cor:gen-Lich} are satisfied. To check the Lichnerowicz condition Theorem~\ref{thm:Lich} we consider the homogeneous 
function $g=z_0$. The Reeb vector field of the link $L_f$ is $\xi_\bfw$ which has Fano index $\cali=|\bfw|-d$. So the Reeb vector field which satisfies Equation \eqref{cheqn} is
$$\xi=\frac{n}{|\bfw|-d}\xi_\bfw.$$
Thus, the charge of $g$ is $\frac{nw_0}{|\bfw|-d}$. 
There are two cases to consider: (1) the degree $d'$ of $f'$ is even in which case $d=d'$, and (2) the degree $d'$ of $f'$ is odd and $d=2d'$. In both cases 
$w_{k+1}=\cdots =w_n=\frac{d}{2}$ and the Fano index $\cali$ is
\begin{equation}
\cali=|\bfw|-d=\sum_{i=0}^kw_i +\frac{d}{2}(n-k)-d=\sum_{i=0}^kw_i +\frac{d}{2}(n-k-2).
\end{equation}
Thus, the charge $\frac{nw_0}{|\bfw|-d}$ of $g$ will be less than one if and only if the strict inequality holds. Moreover, since there are no linear factors in $f$, Remark \ref{lamda1} also implies the result when equality holds.

Next, we check the condition $\gra |_{T\grS}=0$. Note first that $\xi\in \grS$ if and only if we have 
$$b_0=\frac{n}{\sum_{i=0}^kw_i +\frac{d}{2}(n-k-2)}<\frac{1}{w_0}$$
so the range on $\grS$ becomes
$$-\frac{1}{4}\frac{dn}{\sum_{i=0}^kw_i +\frac{d}{2}(n-k-2)}<b_j<\frac{1}{4}\frac{dn}{\sum_{i=0}^kw_i +\frac{d}{2}(n-k-2)}.$$
Now from \eqref{tanform} the tangent space to $\grS$ at $\xi$ is spanned by the set $\{\grz_j\}_{j=1}^r$, so if $\gra$ is the weight of $f$ then $\gra(\grz_j)=0$ since $(u_j,v_j)$ has weight $(1,-1)$ with respect to $\grz_j$.

\end{proof}

\section{Explicit Examples}
In this section our results are presented in the form of tables. The first table presents examples of families of Sasakian structures whose Sasaki cone has dimension greater than one,
and contains no extremal Sasaki metric at all. We shall only present details for a representative case. In the first four examples of Table \ref{sasconetable} the oriented diffeomorphism
type is stated explicitly where $\grS^{4n+1}_1$ is a generator  of the group $bP_{4n+2}$ of homotopy spheres which bound a parallelizable manifold of dimension $4n+2$. Note that 
$\grS^{4n+1}_k=\grS^{4n+1}_1$ if $(2k+1)\equiv \pm 3\mod 8$, and  is the standard when $(2k+1)\equiv \pm 1\mod 8$. It is known that 
$bP_{4n+2}\approx \bbz_2$ for $4n+2\neq 2^j-2$ for some $j$ and equals the identity if $n=1,3,7,15$, so in these cases $\grS^{4n+1}_1=\{id\}$. (It is still an open question for the
remaining cases.) Thus, in these four cases there are a countably infinite number of families of Sasakian structures with $n+1$ dimensional Sasaki cones having no extremal 
Sasaki metrics. Moreover, these belong to a countably infinite number of inequivalent underlying contact structures \cite{Ust99,KwvKo13,Gutt15,BMvK15,Ueb15}. 

The oriented diffeomorphism type can also be determined for the fifth example. These are homotopy spheres of dimension $4n-1$ which bound a parallelizable manifold of 
dimension $4n$. A well known result of Kervaire and Milnor \cite{KeMil63} says that the group $bP_{4n}$ is cyclic of order
$$|bP_{4n}|=2^{2n-2}(2^{2n-1}-1)~\hbox{numerator}~\!\!\biggl(\frac{4B_n}{n}\biggr),$$
where $B_n$ is the $n$-th Bernoulli number. Thus, for example $|bP_8|=28, |bP_{12}|=992, |bP_{16}|=8128.$ Thus, there are a countably infinite number of families of Sasakian
structures with $n$ dimensional Sasaki cones having no extremal Sasaki metrics. Moreover, these also belong to a countably infinite number of underlying contact structures 
(actually almost  contact structures in this case) since $\grS_{k'}^{4n-1}\equiv \grS_k^{4n-1}\mod |bP_{4n}|$, cf. Section 9.5.3 of \cite{BG05}. Here $\grS_1^{4n-1}$ is called 
the {\it Milnor generator}. 

The sixth example consists of a family of rational homology spheres of dimension $4n-1$ with $H_{2n}\approx \bbz_3$ described by Durfee \cite{Dur77}. There are two families of
oriented diffeomorphism types that are equivalent as non-oriented manifolds. We begin with the links $K_2$ and $K_4$ where $K_k$ is the link of the polynomial 
$z_0^k+z_1^3+z_2^2+\cdots z_{2n}^2$. Then $K_{6l+2}$ is diffeomorphic to $K_2\#(-1)^{\frac{n}{2}}l\grS_1^{4n-1}$ and $K_{6l+4}$ is diffeomorphic to 
$K_4\#(-1)^{\frac{n}{2}}l\grS_1^{4n-1}$. Note that the Milnor generator $\grS^{4n-1}_1\approx K_5$. Thus, in this case there are also a countably infinite number of families of 
Sasakian structures with $n$ dimensional Sasaki cones having no extremal Sasaki metrics. We conjecture that these also belong to inequivalent underlying contact structures, 
although this has not been proven yet to the best of the authors' knowledge.

In example seven we have Sasaki manifolds that are homeomorphic to $2k(S^{2n+1}\times S^{2n+2})$. The oriented diffeomorphism type is not known explicitly and not all possible
diffeomorphism types occur. In \cite{BG05h} a formula is given for the number of diffemorphism types $D_n(k)$ obtained by our method, and tables are given for dimension 7 and 11.
Nevertheless, since there is a periodicity modulo a subgroup of $bP_{4n+4}$, we do have countably infinite families of Sasakian structures with an $n+1$ dimensional Sasaki cones
having no extremal Sasaki metrics. Whether the underlying contact structures are inequivalent is also not known at this time.

Finally we consider the connected sums $\#m(S^2\times S^3)$ which are all the compact smooth Sasaki $5$-manifolds with a $2$-torus of Sasaki automorphisms. Here $m=0$ 
means $S^5$. These are represented by links of the polynomial $z_0^p+z_1^q+z_2^2+z_3^2$ with $m=\gcd(p,q)$. They are perhaps the most interesting case owing to the recent 
work of Collins and Sz\'ekelyhidi \cite{CoSz15} which shows that the standard Sasakian structure on the link admits a Sasaki-Einstein metric if and only if $2p>q$ and $2q>p$. 
Thus, when these inequalities are violated, namely when $p\geq 2q$ or $q\geq 2p$, there are for each $m=0,1,\cdots$ a countably infinite number of families of Sasakian structures 
with $2$-dimensional Sasaki cones having no extremal Sasaki metrics. Moreover, they belong to a countably infinite number of inequivalent underlying contact structures 
~\cite{BMvK15,Ueb15}. 

\begin{question}
In the case that Sasaki-Einstein metrics exist, that is when $2p>q$ and $2q>p$, is the entire Sasaki cone exhausted by extremal Sasaki metrics?
\end{question}

Although in most cases of Table \ref{sasconetable} the members of a given family belong to inequivalent contact structures, they do have isomorphic transverse holomorphic structures.
This can be seen by showing that the $S^1$ quotients of the link for each member are isomorphic as algebraic varieties. We refer to the proof of Proposition 4.1 of \cite{BGN03b} for the details in the case of the homotopy spheres. This completes the proof of Theorem \ref{infcon}. \hfill $\Box$

\begin{table}[h]
$\begin{array}{|c|c|c|}
\hline
\text{Diffeo-(homeo)-morphism Type}        & f &  \dim \gt^+ \\\hline
S^{2n}\times S^{2n+1}  & z_0^{8l}+z_1^2+\cdots +z_{2n+1}^2,\ n,l\geq1   &  n+1 \\
S^{2n}\times S^{2n+1}\#\grS^{4n+1}_1 & z_0^{8l+4}+z_1^2+\cdots +z_{2n+1}^2=0,\ n\geq1,l\geq 0  & n+1 \\
\text{Unit tangent bundle of $S^{2n+1}$}  & z_0^{4l+2}+z_1^2+\cdots +z_{2n+1}^2, \ n>1,l\geq 1 &  n+1 \\ 
\text{Homotopy sphere}~\grS_k^{4n+1}  & z_0^{2k+1}+  z_1^2 +\cdots +z_{2n+1}^2, \ n>1, k\geq 1 &  n+1 \\
\text{Homotopy sphere}~\grS_k^{4n-1}  & z_0^{6k-1}+  z_1^3 +z_2^2 +\cdots +z_{2n}^2, \ n\geq 2,k\geq 1 &  n \\
\text{Rat. homology sphere}~H_{2n}\approx\bbz_3  & z_0^{k}+  z_1^3 +\cdots +z_{2n}^2,\ n,k>1 &  n \\
2k(S^{2n+1}\times S^{2n+2}),~~D_{n+1}(k)    & z_0^{2(2k+1)} +z_1^{2k+1}+ z_2^2 +\cdots +z_{2n+2}^2, \ n,k\geq 1 & n+1 \\
\#m(S^2\times S^3), m=\gcd(p,,q)-1    &  z_0^p+z_1^q+z_2^2+z_3^2,~ \ p\geq 2q~\text{or}~q\geq 2p  &   2 \\
\hline
\end{array}$
\vspace{.1in}
\caption{Manifolds having Sasaki Cones with no Extremal Metrics}
\label{sasconetable}
\end{table}

We give the simple details of applying Theorem \ref{whpobs} for a sample case. Consider the unit tangent sphere bundle $T$ of $S^{2n+1}$ represented by the link 
$L(4l+2,2,\ldots,2)$ given by the polynomial
$$f=z_0^{4l+2}+z_1^2+\cdots +z_{2n+1}^2.$$
We mention that there are no exotic structures in this case since $T\# \grS^{4n+1}$ is diffeomorphic to $T$. Applying Theorem \ref{whpobs} we take $n\mapsto 2n+1$, 
$k=0,d=4l+2$ and $w_0=1$, so our inequality becomes
$$0<w_0-w_0(2n+1)+(2l+1)(2n-1)=2l(2n-1)-1.$$

Of course there are many  other examples to which we can apply Theorem \ref{whpobs}. For example, the rational homology spheres $M_k^{4n-1}$ of dimension $4n-1$ with
$H_{2n-1}\approx\bbz_k$  given by Briskorn-Pham link of the polynomial  $z_0^{k}+  z_1^2 +\cdots +z_{2n}^2, $ with $k>2,n>1$ has a Sasaki cone of dimension  $n+1$ and 
one can easily check that the inequality of Theorem \ref{whpobs} holds in this case. So for each $k>2$ $M_k^{4n-1}$ has an $n+1$-dimensional Sasaki cone having no 
extremal Sasaki metrics. We mention that the case $k=2$ $M_2^{4n-1}$ is a Stiefel manifold which admits a Sasaki-Einstein metric. Many more examples of 
rational homology spheres can be obtained by noting that as in Corollary 9.5.3 of \cite{BG05} if the link of $f'$ in the polynomial \eqref{Feqn} is a rational homology sphere and 
$n-k$ is even, then the link of $f$ is also a rational homology sphere.

Finally we give a table of ADE $n$-folds which admit no extremal Sasaki metrics in the entire Sasaki cone or equivalently no Ricci-flat K\"ahler metric on the corresponding Calabi-Yau cone. Notice that there is some overlap with Table \ref{sasconetable}. These are of particular interest since they have been used in Physics in conformal field theory \cite{GVW00,GMSY06}. 

\begin{table}[h]
$\begin{array}{|c|c|c|c|}
\hline
          & f & \mathbf{w} &  \dim \gt^+  \\\hline
 A_{k-1}  & z_0^k +z_1^2 +\cdots +z_n^2,\ k\geq 3   & (2,k,\cdots,k) & 1+\lfloor\frac{n}{2}\rfloor  \\
 D_{k+1} & z_0^k +z_0 z_1^2 +z_2^2 +\cdots +z_n^2 ,\ k\geq 2 & (2,k-1,k,\cdots,k) & \lceil\frac{n}{2}\rceil \\
 E_6        & z_0^4 +z_1^3 + z_2^2 +\cdots +z_n^2 & (4,3,6,\cdots,6)  & \lceil\frac{n}{2}\rceil \\ 
 E_7        & z_0^3 +z_0 z_1^3 +z_2^2 +\cdots +z_n^2 & (6,4,9,\cdots,9) & \lceil\frac{n}{2}\rceil \\
 E_8        &  z_0^5 +z_1^3 +z_2^2 +\cdots +z_n^2 & (6,10,15,\cdots,15) & \lceil\frac{n}{2}\rceil \\
 \hline
\end{array}$
\vspace{.1in}
\caption{ADE n-folds with $n\geq 4$ whose Sasaki Cones have no Extremal Metrics}
\label{fig:4-fold}
\end{table}

\def\cprime{$'$} \def\cprime{$'$} \def\cprime{$'$} \def\cprime{$'$}
  \def\cprime{$'$} \def\cprime{$'$} \def\cprime{$'$} \def\cprime{$'$}
  \def\cdprime{$''$} \def\cprime{$'$} \def\cprime{$'$} \def\cprime{$'$}
  \def\cprime{$'$}
\providecommand{\bysame}{\leavevmode\hbox to3em{\hrulefill}\thinspace}
\providecommand{\MR}{\relax\ifhmode\unskip\space\fi MR }
\providecommand{\MRhref}[2]{%
  \href{http://www.ams.org/mathscinet-getitem?mr=#1}{#2}
}
\providecommand{\href}[2]{#2}


\begin{thebibliography}{GMSY07}

\bibitem[BG06]{BG05h}
C.~P. Boyer and K.~Galicki, \emph{Highly connected manifolds with positive
  {R}icci curvature}, Geom. Topol. \textbf{10} (2006), 2219--2235 (electronic).
  \MR{MR2284055 (2007k:53057)}

\bibitem[BG08]{BG05}
Charles~P. Boyer and Krzysztof Galicki, \emph{Sasakian geometry}, Oxford
  Mathematical Monographs, Oxford University Press, Oxford, 2008. \MR{MR2382957
  (2009c:53058)}

\bibitem[BGK05]{BGK05}
C.~P. Boyer, K.~Galicki, and J.~Koll\'ar, \emph{Einstein metrics on spheres},
  Ann. of Math. (2) \textbf{162} (2005), no.~1, 557--580. \MR{2178969
  (2006j:53058)}

\bibitem[BGN03]{BGN03b}
Charles~P. Boyer, Krzysztof Galicki, and Michael Nakamaye, \emph{Sasakian
  geometry, homotopy spheres and positive {R}icci curvature}, Topology
  \textbf{42} (2003), no.~5, 981--1002. \MR{1978045}

\bibitem[BGS08]{BGS06}
Charles~P. Boyer, Krzysztof Galicki, and Santiago~R. Simanca, \emph{Canonical
  {S}asakian metrics}, Commun. Math. Phys. \textbf{279} (2008), no.~3,
  705--733. \MR{MR2386725}

\bibitem[BMvK16]{BMvK15}
Charles~P. Boyer, Leonardo Macarini, and Otto van Koert, \emph{Brieskorn
  manifolds, positive {S}asakian geometry, and contact topology}, Forum Math.
  \textbf{28} (2016), no.~5, 943--965. \MR{3543703}

\bibitem[CS12]{CoSz12}
Tristan Collins and Gabor Sz{\'e}kelyhidi, \emph{K-semistability for irregular
  {S}asakian manifolds}, to appear in J. Diff. Geom.; preprint;
  arXiv:math.DG/1204.2230 (2012).

\bibitem[CS15]{CoSz15}
Tristan Collins and G\'{a}bor Sz\'ekelyhidi, \emph{{S}asaki-{E}instein metrics
  and {K}-stability}, arXiv:1512.07213, December 2015.

\bibitem[Don05]{Don05}
S.~K. Donaldson, \emph{Lower bounds on the {C}alabi functional}, J.
  Differential Geom. \textbf{70} (2005), no.~3, 453--472. \MR{2192937}

\bibitem[Dur77]{Dur77}
A.~H. Durfee, \emph{Bilinear and quadratic forms on torsion modules}, Advances
  in Math. \textbf{25} (1977), no.~2, 133--164. \MR{0480333 (58 \#506)}

\bibitem[EKA90]{ElK}
A.~El~Kacimi-Alaoui, \emph{Op\'erateurs transversalement elliptiques sur un
  feuilletage riemannien et applications}, Compositio Math. \textbf{73} (1990),
  no.~1, 57--106. \MR{91f:58089}

\bibitem[FM95]{FuMa95}
Akito Futaki and Toshiki Mabuchi, \emph{Bilinear forms and extremal {K}\"ahler
  vector fields associated with {K}\"ahler classes}, Math. Ann. \textbf{301}
  (1995), no.~2, 199--210. \MR{1314584}

\bibitem[FOW09]{FOW06}
Akito Futaki, Hajime Ono, and Guofang Wang, \emph{Transverse {K}\"ahler
  geometry of {S}asaki manifolds and toric {S}asaki-{E}instein manifolds}, J.
  Differential Geom. \textbf{83} (2009), no.~3, 585--635. \MR{MR2581358}

\bibitem[GMSY07]{GMSY06}
Jerome~P. Gauntlett, Dario Martelli, James Sparks, and Shing-Tung Yau,
  \emph{Obstructions to the existence of {S}asaki-{E}instein metrics}, Comm.
  Math. Phys. \textbf{273} (2007), no.~3, 803--827. \MR{MR2318866
  (2008e:53070)}

\bibitem[Gut15]{Gutt15}
Jean Gutt, \emph{The positive equivariant symplectic homology as an invariant
  for some contact manifolds}, arXiv:1503.01443 (2015).

\bibitem[GVW00]{GVW00}
Sergei Gukov, Cumrun Vafa, and Edward Witten, \emph{C{FT}'s from {C}alabi-{Y}au
  four-folds}, Nuclear Phys. B \textbf{584} (2000), no.~1-2, 69--108.
  \MR{1786689}

\bibitem[KM63]{KeMil63}
M.~A. Kervaire and J.~W. Milnor, \emph{Groups of homotopy spheres. {I}}, Ann.
  of Math. (2) \textbf{77} (1963), 504--537. \MR{26 \#5584}

\bibitem[KvK16]{KwvKo13}
Myeonggi Kwon and Otto van Koert, \emph{Brieskorn manifolds in contact
  topology}, Bull. Lond. Math. Soc. \textbf{48} (2016), no.~2, 173--241.
  \MR{3483060}

\bibitem[MSY08]{MaSpYau06}
Dario Martelli, James Sparks, and Shing-Tung Yau, \emph{Sasaki-{E}instein
  manifolds and volume minimisation}, Comm. Math. Phys. \textbf{280} (2008),
  no.~3, 611--673. \MR{MR2399609 (2009d:53054)}

\bibitem[Sz{\'e}06]{Sze06}
Gabor Sz{\'e}kelyhidi, \emph{Extremal metrics and {K}-stability},
  arXiv:0611002, thesis (2006).

\bibitem[Sz{\'e}07]{Sze07}
G{\'a}bor Sz{\'e}kelyhidi, \emph{Extremal metrics and {$K$}-stability}, Bull.
  Lond. Math. Soc. \textbf{39} (2007), no.~1, 76--84. \MR{2303522}

\bibitem[Ueb16]{Ueb15}
Peter Uebele, \emph{Symplectic homology of some {B}rieskorn manifolds}, Math.
  Z. \textbf{283} (2016), no.~1-2, 243--274. \MR{3489066}

\bibitem[Ust99]{Ust99}
I.~Ustilovsky, \emph{Infinitely many contact structures on {$S^{4m+1}$}},
  Internat. Math. Res. Notices (1999), no.~14, 781--791. \MR{1704176
  (2000f:57028)}

\bibitem[vC11]{vCo11}
Craig van Coevering, \emph{Examples of asymptotically conical {R}icci-flat
  {K}\"ahler manifolds}, Math. Z. \textbf{267} (2011), no.~1-2, 465--496.
  \MR{2772262}

\end{thebibliography}
\end{document}